\documentclass[11pt]{amsart}

\usepackage{graphicx,amsmath,amssymb,amsthm,bbm,url}
\usepackage{algorithm,algorithmic,caption,subcaption}
\usepackage{fullpage}
\usepackage{color,arydshln,verbatim}
\usepackage[dvipsnames]{xcolor}

\newtheorem{lem}{Lemma}
\newtheorem{cor}{Corollary}
\newtheorem{thm}{Theorem}

\DeclareMathOperator{\supp}{supp}

\def \a {\mathbf a}

\def \x {\mathbf x}

\def \y {\mathbf y}

\def \v {\mathbf v}
\def \X { X}

\def \m {\mathbf m}
\def \n {\mathbf n}
\newcommand{\R}{\ensuremath{\mathbb{R}}}  
\renewcommand{\H}{\ensuremath{\mathcal{H}}}
\newcommand{\tX}{\ensuremath{\widetilde{X}}}

\newcommand{\tx}{\ensuremath{\widetilde{\x}}}
\newcommand{\C}{\ensuremath{\mathbb{C}}}
\newcommand{\N}{\ensuremath{\mathbb{N}}}

\newcommand{\bigfrac}[2]{\ensuremath{\frac{\displaystyle #1}{\displaystyle #2}}}
\newcommand{\Span}{\ensuremath{\mathrm{span}}}

\newcommand{\ee}{\ensuremath{\mathbbm{e}}}
\newcommand{\ii}{\ensuremath{\mathbbm{i}}}
\newcommand{\diag}{\ensuremath{\mathrm{diag}}}
\newcommand{\sgn}{\ensuremath{\mathrm{sgn}}}
\newcommand{\vol}{\ensuremath{\mathrm{vol}}}

\setcounter{MaxMatrixCols}{20}


\begin{document}
\pagestyle{plain}

\title{Phase Retrieval from Local Measurements:  Improved Robustness via Eigenvector-Based Angular Synchronization}\thanks{Mark A. Iwen:  Department of Mathematics, and Department of Computational Mathematics, Science, and Engineering (CMSE), Michigan State University, East Lansing, MI, 48824, USA (markiwen@math.msu.edu).  Supported in part by NSF DMS-1416752. \\ \indent B. Preskitt:  Department of Mathematics, University of California San Diego, La Jolla, CA 92093, USA (bpreskitt@ucsd.edu)\\ \indent  R. Saab:  Department of Mathematics, University of California San Diego, La Jolla, CA 92093, USA (rsaab@ucsd.edu).  Supported in part by a Hellman Fellowship and the NSF under DMS-1517204 \\ \indent A. Viswanathan:  Department of Mathematics, Michigan State University, East Lansing, MI, 48824, USA (aditya@math.msu.edu).}
\author{Mark A. Iwen, Brian Preskitt, Rayan Saab, Aditya Viswanathan}

\begin{abstract}
%
We improve a phase retrieval approach that uses correlation-based measurements with compactly supported measurement masks \cite{IVW2015_FastPhase}.  The improved algorithm admits deterministic measurement constructions together with a robust, fast recovery algorithm that consists of solving a system of linear equations in a lifted space, followed by finding an eigenvector (e.g.,~via an inverse power iteration).  Theoretical reconstruction error guarantees from \cite{IVW2015_FastPhase} are improved as a result for the new and more robust reconstruction approach proposed herein.  Numerical experiments demonstrate robustness and computational efficiency that outperforms competing approaches on large problems.  Finally, we show that this approach also trivially extends to phase retrieval problems based on windowed Fourier measurements.

\end{abstract}

\maketitle
\thispagestyle{empty}

\section{Introduction}
\label{sec:intro}
\definecolor{Fgreen}{RGB}{34,139,34}




Consider the problem of recovering a vector $\x_0 \in \C^d$ from measurements $\y \in \R^D$ with entries $y_j$ given by
\begin{equation}
y_j = |\langle \mathbf{a}_j, \x_0 \rangle|^2 + \eta_j, \quad\quad j = 1,\ldots, D.\label{eq:phase_ret}
\end{equation}
Here the measurement vectors $\mathbf{a}_j\in \C^d$ are known and the scalars $\eta_j \in \R$ denote noise terms.  This problem is known as the \emph{phase retrieval problem} (see, e.g., \cite{walther1963question, millane1990phase}), as we may think of the $|\cdot|^2$ in \eqref{eq:phase_ret} as erasing the phases of the measurements $\langle \a_j, \x_0 \rangle$ in an otherwise linear system of equations.

The phase retrieval problem  arises in many important signal acquisition schemes, including crystallography and ptychography (e.g., \cite{millane1990phase
}), diffraction imaging \cite{gerchberg1972practical}, and optics \cite{millane1990phase, walther1963question}, 
  among many others.  Due to the breadth and importance of the applications, there has been significant interest in developing efficient algorithms to solve this problem. Indeed, one of the first algorithms proposed came in the early 1970's with the work of Gerchberg and Saxton \cite{gerchberg1972practical}. Since then many variations of their method have been proposed (e.g, \cite{
fienup1978reconstruction}) and used widely in practice.  On the other hand -- until recently -- there have not been theoretical guarantees concerning the conditions under which these algorithms recover the underlying signal and the extent to which they can tolerate measurement error. Nevertheless, starting in 2006 a growing body of work (e.g., \cite{ balan2007fast, balan2006signal, bandeira2013near, bodmann2013stable, candes2012phaselift, eldar2012phase, IVW2015_FastPhase, li2012sparse}) has emerged, proposing new methods with theoretical performance guarantees under various assumptions on the signal $\x_0$ and the measurement vectors $\a_j$. Unfortunately, the assumptions (especially on the measurement vectors) often do not correspond to the setups used in practice. In particular, the mathematical analysis often requires that the measurement vectors be random or generic (e.g., \cite{balan2006signal, bandeira2013near, candes2012phaselift}) while in practice the measurement vectors are a deterministic aspect of the imaging apparatuses employed.  A main contribution of this paper is analyzing a construction that more closely matches practicable and deterministic measurement schemes. We propose a two-stage algorithm for solving the phase retrieval problem in this setting and we analyze our method, providing upper bounds on the associated reconstruction error.

\subsection{Local Correlation Measurements}
\label{sec:locCorrMeas}

Consider the case where the vectors $\a_j$ represent shifts of compactly-supported vectors $\m_j, j = 1, \ldots, K$ for some $K \in \N$.  Using the notation $[n]_k:=\{k,\ldots,k+n-1\}\subset \N$, and defining $[n]:=[n]_1$ we take $\x_0, \m_j \in \C^d$ with $\supp(\m_j)=[\delta]\subset [d]$ for some $\delta \in \N$.  We also denote the space of Hermitian matrices in $\C^{k \times k}$ by $\H^k$.  Now we have measurements of the form \begin{equation} (\y_\ell)_j = |\langle \x_0, S^*_\ell \m_j \rangle|^2, \quad (j, \ell) \in [K] \times P, \label{eq:shift_model} \end{equation} where $P \subset [d]_0$ is arbitrary and $S_\ell : \C^d \to \C^d$ is the discrete circular shift operator, namely \begin{equation*} (S_\ell \x_0)_j = (\x_0)_{\ell + j}. \end{equation*}  One can see that \eqref{eq:shift_model} represents the modulus squared of the correlation between $\x_0$ and locally supported measurement vectors.  Therefore, we refer to the entries of $\y$ as local correlation measurements.
Following \cite{candes2013phaselift, IVW2015_FastPhase, balan2006signal}, the problem may be lifted to a linear system on the space of $\C^{d \times d}$ matrices.  In particular, we observe that 
\begin{align*}
	(\y_\ell)_j &= |\langle S_\ell \x_0, \m_j\rangle|^2 = \m_j^* (S_\ell \x_0) (S_\ell \x_0)^* \m_j \\
	&= \langle \x_0 \x_0^*, S_\ell^* \m_j \m_j^* S_\ell\rangle,
\end{align*}
where the inner product above is the Hilbert-Schmidt inner product. Restricting to the case $P = [d]_0$,  for every matrix $A \in \Span\{S_\ell^* \m_j \m_j^* S_\ell\}_{\ell, j}$ we have $A_{ij} = 0$ whenever $|i - j| \mod d \ge \delta$.  Therefore, we introduce the family of operators $T_k : \C^{d \times d} \to \C^{d \times d}$ given by \[T_k(A)_{ij} = \left\{\begin{array}{r@{,\qquad}l} A_{ij} & |i - j| \mod d < k \\ 0 & \text{otherwise}. \end{array}\right.\]  Note that $T_\delta$ is simply the orthogonal projection operator onto its range $T_\delta(\C^{d \times d}) \supseteq \Span\{S_\ell^* \m_j \m_j^* S_\ell\}_{\ell, j}$; therefore, \begin{equation} (\y_\ell)_j = \langle \x_0 \x_0^*, S_\ell^* \m_j \m_j^* S_\ell \rangle = \langle T_{\delta}(\x_0 \x_0^*), S_\ell^* \m_j \m_j^* S_\ell \rangle, \quad (j, \ell) \in [K] \times P. \label{eq:lifted_system} \end{equation}  

For convenience, we set $D := K|P|$ and define the map $\mathcal{A} : \C^{d \times d} \to \C^D$ 
\begin{equation}\mathcal{A}(X) = [\langle X, S_\ell^* \m_j \m_j^* S_\ell \rangle]_{(\ell, j)}.\label{eq:linear}\end{equation}  
Sometimes, we consider $\mathcal{A}|_{T_{\delta}(\C^{d \times d})}$, the restriction of $\mathcal{A}$ to the domain $T_{\delta}(\C^{d \times d})$; indeed, if this linear system is injective on $T_\delta(\C^{d \times d})$, then we can readily solve for 
\begin{equation}T_\delta(\x_0 \x_0^*) =: X_0 \label{equdef:X0} \end{equation}  
using our measurements $(\y_\ell)_j = (\mathcal{A}(\x_0 \x_0^*))_{(\ell,j)}$.
In \cite{IVW2015_FastPhase}, deterministic masks $\m_j$ were constructed for which \eqref{eq:lifted_system} was indeed invertible for certain choices of $K$ and $P$.  An additional construction is given below in \S\ref{sec:MeasMatrix}.

Improving on \cite{IVW2015_FastPhase}, we can further see that $\x_0$ can be deduced from $X_0$ up to a global phase in the noiseless case as follows:  First, $X_0$ immediately gives the magnitudes of the entries of $\x_0$ since $(X_0)_{ii} = |(x_0)_i|^2$.  
The only challenge remaining, therefore, is to find $\arg((x_0)_i)$ up to a global phase.  We proceed by defining $\tilde{\x}_0$ and $\widetilde{X}_0$ by \[(\tilde{x}_0)_i = \sgn((x_0)_i)\] \[(\widetilde{X}_0)_{ij} = \left\{\begin{array}{r@{,\quad}l} \sgn((X_0)_{ij}) & |i - j| \mod d < \delta \\ 0 & \text{otherwise} \end{array}\right.,\] where $\sgn : \C \to \C$ is the usual normalization mapping \[\sgn(z) = \left\{\begin{array}{r@{,\qquad}l} \dfrac{z}{|z|} & z \neq 0 \\ 1 & \text{otherwise} \end{array}\right..\]  Indeed, in \cite{IV_SPIE}, it was shown that the phases of the entries of $\x_0$ (up to a global phase) are given by the leading eigenvector of $\widetilde X_0$.  Moreover, it was shown that this leading eigenvector is unique.  Lemma \ref{lem:EigGap} of this paper improves in these results by giving a lower bound on the gap between the top two eigenvalues of $\widetilde X_0$.  This better understanding of the spectrum of $\widetilde X_0$ is then leveraged to analyze the robustness of this eigenvector-based phase retrieval method to measurement noise.




\subsection{Contributions}
\label{sec:mainRes}

In this paper, we analyze a phase retrieval algorithm (Algorithm \ref{alg:phaseRetrieval1}) for estimating a vector $\x_0$ from noisy localized measurements of the form
 \begin{equation} (\y_\ell)_j = |\langle \x_0, S^*_\ell \m_j \rangle|^2 + n_{j \ell}, \quad (j, \ell) \in [2\delta-1] \times [d]_0 \label{eq:shift_model_noise}.\end{equation}
 This algorithm is composed of two main stages. First, we apply the inverse of the linear operator
$$\mathcal{A}|_{T_{\delta}(\C^{d \times d})}: T_{\delta}(\C^{d\times d}) \to \C^{(2\delta-1)d}$$
  defined immediately after \eqref{eq:linear}, to obtain a Hermitian estimate ${X}$ of $T_\delta{(\x_0\x_0^*)}$ given by 
  \begin{equation}X = \Big( (\mathcal{A}|_{T_\delta(\C^{d\times d})})^{-1} {\bf y}\Big)/2 + \Big( (\mathcal{A}|_{T_\delta(\C^{d\times d})})^{-1} {\bf y}\Big)^*/2  \in T_\delta(\C^{d\times d}) \label{eq:X}
  .\end{equation} In particular, our choice of $\m_j$ as described in Section \ref{sec:MeasMatrix} ensures that $\mathcal{A}|_{T_{\delta}(\C^{d \times d})}$ is both invertible and well conditioned. Next, once we have an approximation of $T_\delta(\x_0 \x_0^*)$, we estimate the magnitudes and phases of the entries of $\x_0$ separately. 
  
  For the magnitudes, we simply use the square-roots of the diagonal entries of $X$. For the phases, we use the normalized eigenvector corresponding to the top eigenvalue of \begin{equation}\widetilde{X}:=\frac{X}{|X|},\label{eq:Xtilde}\end{equation}
where the operations are considered elements.  The hope is that the leading eigenvector of $\tX$ still serves as a good approximation to the leading eigenvector of \begin{equation}\tX_0:= \frac{X_0}{|X_0|}:= \frac{T_\delta(\x_0\x_0^*)}{|T_\delta(\x_0\x_0^*)|},\label{eq:X_0}\end{equation}
 which is seen in Section \ref{sec:Spectrum} (see also \cite{IV_SPIE}) to indeed be a scaled version of the phase vector \begin{equation}\tilde{\x}_0 := \frac{\x_0}{|\x_0|}\label{eq:tx_0}\end{equation} (up to a global phase ambiguity).  The entire method is summarized in Algorithm \ref{alg:phaseRetrieval1}, and its associated recovery guarantees are presented in Theorem \ref{Prop:RecovRes}, while its computational complexity is discussed after the theorem.
\begin{algorithm}
\renewcommand{\algorithmicrequire}{\textbf{Input:}}
\renewcommand{\algorithmicensure}{\textbf{Output:}}
\caption{Fast Phase Retrieval from Local Correlation Measurements}
\label{alg:phaseRetrieval1}
\begin{algorithmic}[1]
    \REQUIRE Measurements $\y\in \R^D$ as per \eqref{eq:shift_model_noise}
    \ENSURE ${\bf x} \in \mathbbm{C}^d$ with ${\bf x} \approx \mathbbm{e}^{-\mathbbm{i} \theta} {\bf x}_0$ for some $\theta \in [0, 2 \pi]$ 
    \STATE Compute the Hermitian matrix $X = \Big( (\mathcal{A}|_{T_\delta(\C^{d\times d})})^{-1} {\bf y}\Big)/2 + \Big( (\mathcal{A}|_{T_\delta(\C^{d\times d})})^{-1} {\bf y}\Big)^*/2  \in T_\delta(\C^{d\times d})$ as an estimate of $T_\delta(\x_0\x_0^*)$
    \STATE Form the banded matrix of phases, $\tilde{\X} \in T_\delta(\mathbbm{C}^{d \times d})$, by normalizing the non-zero entries of $\X$ 
    \STATE Compute the normalized top eigenvector of $\tilde{\X}$, denoted $\tilde{\x} \in \mathbbm{C}^d$, with $\| \tilde{\x} \|_2 = \sqrt{d}$
    \STATE Set $x_j = \sqrt{X_{j,j}} \cdot (\tilde{x})_j$ for all $j \in [d]$ to form $\x \in \mathbbm{C}^d$
    \end{algorithmic}
\end{algorithm}

\begin{thm}
Let $(x_0)_{\rm min} := \min_j |(x_0)_j|$ be the smallest magnitude of any entry in $\x_0 \in \mathbbm{C}^d$.  Then, the estimate $\x$ produced in Algorithm~\ref{alg:phaseRetrieval1} satisfies 
\[ \min_{\theta \in [0, 2 \pi]} \left\Vert  \x_0 - \mathbbm{e}^{\mathbbm{i} \theta} \x \right\Vert_2 \leq C \left( \frac{\Vert \x_0 
        \Vert_{\infty}}{(x_0)^2_{\rm min}} \right) \left( \frac{d}{\delta} \right)^2 \kappa \| \n \|_2 + C d^{\frac{1}{4}} \sqrt{\kappa \| \n \|_2 },\]
where $\kappa > 0$ is the condition number of the system \eqref{eq:X} and $C \in \mathbb{R}^+$ is an absolute universal constant.
\label{Prop:RecovRes}
\end{thm}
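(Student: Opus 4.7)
The plan is to split the reconstruction error into magnitude and phase contributions, each of which will be controlled by the linear-inversion error $\|X - X_0\|_F$.  After an optimal global phase rotation we have the entrywise decomposition
\[
(x_0)_j - e^{i\theta}x_j \;=\; \bigl(|(x_0)_j| - \sqrt{X_{jj}}\bigr)(\tilde{x}_0)_j \;+\; \sqrt{X_{jj}}\bigl((\tilde{x}_0)_j - e^{i\theta}(\tilde{x})_j\bigr),
\]
so the triangle inequality reduces the problem to bounding $\bigl\||\x_0|-\sqrt{\diag(X)}\bigr\|_2$ and $\bigl\|\tilde{\x}_0 - e^{i\theta}\tilde{\x}\bigr\|_2$ separately.

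For the linear-inversion bound, since $\y=\mathcal{A}(X_0)+\n$ with $X_0\in T_\delta(\C^{d\times d})$ Hermitian, the Hermitianization step in Algorithm~\ref{alg:phaseRetrieval1} leaves $X_0$ invariant, and the definition of the condition number $\kappa$ yields $\|X - X_0\|_F \leq \kappa\,\|\n\|_2$.  For the magnitude error, the elementary inequality $|\sqrt{a}-\sqrt{b}|^2\leq |a-b|$ for $a,b\ge 0$, applied entrywise on the diagonal and followed by Cauchy--Schwarz, gives
\[
\bigl\||\x_0|-\sqrt{\diag(X)}\bigr\|_2^2 \;\leq\; \sum_{j=1}^d |(X_0)_{jj}-X_{jj}| \;\leq\; \sqrt{d}\,\|X-X_0\|_F \;\leq\; \sqrt{d}\,\kappa\|\n\|_2,
\]
which immediately produces the $d^{1/4}\sqrt{\kappa\|\n\|_2}$ term of the theorem.

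For the phase error, I would analyze the entrywise normalization next.  Because every nonzero entry of $X_0$ satisfies $|(X_0)_{ij}| = |(x_0)_i||(x_0)_j|\geq (x_0)_{\min}^2$, the Lipschitz property of $z\mapsto z/|z|$ away from zero implies, in the small-noise regime where no nonzero entry of $X_0$ is driven past $0$,
\[
|\tX_{ij}-(\tX_0)_{ij}| \;\lesssim\; \frac{|X_{ij}-(X_0)_{ij}|}{(x_0)_{\min}^2}
\qquad\text{on } \supp(X_0).
\]
Combining this with the banded support of $\tX-\tX_0$ (at most $2\delta-1$ nonzero entries per row) produces an operator-norm bound on $\tX-\tX_0$.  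Together with Lemma~\ref{lem:EigGap}, which provides a quantitative lower bound on the gap between the top two eigenvalues of $\tX_0$ (whose leading eigenvector is the scaled phase vector $\tilde{\x}_0$), the Davis--Kahan $\sin\Theta$ theorem then yields
\[
\bigl\|\tilde{\x}_0 - e^{i\theta}\tilde{\x}\bigr\|_2 \;\lesssim\; \frac{\|\tX-\tX_0\|_{\mathrm{op}}}{\text{eigengap of }\tX_0} \;\lesssim\; \left(\frac{d}{\delta}\right)^2 \frac{\kappa\|\n\|_2}{(x_0)_{\min}^2}
\]
after choosing $\theta$ optimally to align the two normalized eigenvectors.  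Multiplying by the bound $\sqrt{X_{jj}}\lesssim \|\x_0\|_\infty$ (valid once the noise is small enough that $\sqrt{X_{jj}}$ is a good estimate of $|(x_0)_j|$) yields the first term of the theorem, and summing the two contributions through the triangle inequality completes the proof.

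The main technical obstacle is obtaining the correct $(d/\delta)^2$ dependence in the phase step:  one must carefully combine the entrywise perturbation bound above, the Frobenius-to-operator-norm conversion for banded Hermitian matrices in $T_\delta(\C^{d\times d})$, and the eigengap estimate of Lemma~\ref{lem:EigGap}, while simultaneously verifying that $\|\tX-\tX_0\|_{\mathrm{op}}$ lies below the spectral gap of $\tX_0$ (which is the regime in which Davis--Kahan produces a meaningful bound and in which the normalization $z\mapsto z/|z|$ can be linearized).  All other steps amount to either the elementary magnitude inequality above or bookkeeping on the decomposition of $\x_0-e^{i\theta}\x$.
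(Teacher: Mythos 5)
Your decomposition into magnitude and phase errors, the bound $\|X-X_0\|_F\le\kappa\|\n\|_2$, and the magnitude estimate giving the $d^{1/4}\sqrt{\kappa\|\n\|_2}$ term all match the paper (cf.\ the proof of Theorem~\ref{thm:MainRes}); likewise your entrywise bound $|\tX_{ij}-(\tX_0)_{ij}|\lesssim|X_{ij}-(X_0)_{ij}|/(x_0)_{\min}^2$ is exactly the mechanism of Lemma~\ref{lem:EtaBound} with $S_\rho=\emptyset$, and in fact it holds unconditionally (one has $|\sgn(a+n)-\sgn(a)|\le 2|n|/|a|$ with no small-noise assumption), so your caveats there, and the conditional bound $\sqrt{X_{jj}}\lesssim\|\x_0\|_\infty$, are unnecessary and should be avoided since the theorem is unconditional --- the paper sidesteps the latter by writing $\x_0-\ee^{\ii\theta}\x = |\x_0|\circ(\tx_0-\ee^{\ii\theta}\tx)+(|\x_0|-|\x|)\circ \ee^{\ii\theta}\tx$, i.e.\ the \emph{known} magnitudes multiply the phase error.

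The genuine gap is the phase step. Davis--Kahan applied to unit eigenvectors gives $\sin\angle(\u_1,\v_1)\le \|\tX-\tX_0\|_{\rm op}/\mathrm{gap}$, and since the phase vectors are normalized to $\|\tx\|_2=\sqrt d$ you must pay an extra $\sqrt d$ (your displayed inequality omits it). With the eigengap $\gtrsim\delta^3/d^2$ of Lemma~\ref{lem:EigGap} and the only generally valid norm conversion $\|\tX-\tX_0\|_{\rm op}\le\|\tX-\tX_0\|_F\lesssim\kappa\|\n\|_2/(x_0)_{\min}^2$, this route yields
\begin{equation*}
\min_\theta\|\tx_0-\ee^{\ii\theta}\tx\|_2\;\lesssim\;\frac{d^{5/2}}{\delta^{3}}\cdot\frac{\kappa\|\n\|_2}{(x_0)_{\min}^2},
\end{equation*}
which exceeds the theorem's $(d/\delta)^2$ term by a factor $\sqrt{d}/\delta$ --- a real loss in the regime of interest $\delta=\mathcal O(\log d)$. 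The ``Frobenius-to-operator-norm conversion for banded Hermitian matrices'' you invoke to close this gap does not exist: if the perturbation is concentrated in a single $\delta\times\delta$ principal block (which the banded structure permits), then $\|\tX-\tX_0\|_{\rm op}\ge\|\tX-\tX_0\|_F/\sqrt\delta$, far larger than the $\sqrt{\delta/d}\,\|\tX-\tX_0\|_F$-type bound you would need. This is precisely why the paper does \emph{not} use a $\sin\Theta$ argument in the main text: its Appendix (Theorem~\ref{cor2:GenBound}) carries out your approach and obtains only the strictly weaker $C\eta d^3/\delta^{5/2}$ scaling, while the stated $(d/\delta)^2$ bound is obtained through the spectral-graph machinery of Section~\ref{sec:Perturb} --- the Cheeger inequality for the connection Laplacian (Lemma~\ref{lem:CheegerInequality}), applied to $\tx=\sgn(\u)$ rather than to the eigenvector itself, together with the graph spectral gap $\tau\gtrsim\delta^2/d^2$ of Corollary~\ref{cor:Gspectrum} and $\min_i\deg(i)=2\delta-1$, yielding Theorem~\ref{thm:SpecGraphPertBound} and Corollary~\ref{cor:GenBoundv2}. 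Without an argument of that kind (or some other improvement over generic eigenvector perturbation), your proposal proves a weaker theorem, not the stated one.
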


Theorem~\ref{Prop:RecovRes}, which deterministically depends on both the masks and the signal, is a strict improvement over the first deterministic theoretical robust recovery guarantees proven in \cite{IVW2015_FastPhase} for a wide class of non-vanishing signals.

%

Consider the computational complexity of Algorithm~\ref{alg:phaseRetrieval1} (assuming, of course, that $\mathcal{A}|_{T_\delta(\C^{d\times d})}$ is actually invertible).  One can see that line 1 can  always be done in at most $\mathcal{O}(d \cdot \delta^3 + \delta \cdot d \log d)$ flops using a block circulant matrix factorization approach (see Section 3.1 in \cite{IVW2015_FastPhase}). In certain cases one can improve on this; for example, the second (new) mask construction of Section \ref{sec:MeasMatrix} allows line 1 to be performed in only $\mathcal{O}(d\cdot\delta)$ flops.  Even in the worst case, however, if one precomputes this block circulant matrix factorization in advance given the masks $\m_j$ then line 1 can always be done in $\mathcal{O}(d \cdot \delta^2 + \delta \cdot d \log d)$ flops thereafter.  

The top eigenvector $\tilde{\x}$ of $\tilde{X}$ is guaranteed to be found in line 3 of Algorithm~\ref{alg:phaseRetrieval1} in the low-noise (e.g., noiseless) setting via the shifted inverse power method with shift $\mu := 2 \delta - 1$ and initial vector ${\bf e}_1$ (the first standard basis vector).  
More generally, one may utilize the Rayleigh quotient iteration with the initial eigenvalue estimate fixed to $2 \delta - 1$ for the first few iterations.
In either case, each iteration can be accomplished with $\mathcal{O}(d \cdot \delta^2)$ flops due to the banded structure of $\tilde{X}$ (see, e.g., \cite{trefethen1997numerical}).  In the low-noise setting the top eigenvector $\tilde{\x}$ can be computed to machine precision in $\mathcal{O}(\log d)$ such iterations,\footnote{To see why $\mathcal{O}(\log d)$ iterations suffice one can appeal to lemmas~\ref{lem:spectrum}~and~\ref{lem:EigGap} below.  Let $|\lambda_1| > |\lambda_2| \geq \dots \geq |\lambda_d|$ be the eigenvalues of $\tilde{X}$ with associated orthonormal eigenvectors ${\bf u}_j \in \mathbbm{C}^d$.  Let $\delta := |\lambda_1| - |\lambda_2| > 0$.  When the noise level is sufficiently low (so that $\tilde{X} \approx \tilde{X}_0$) one will have both $(i)$ $|{\bf e}^*_1 {\bf u}_j| = \Theta(1/\sqrt{d})~\forall j \in [d]$, and $(ii)$ $\mu \in (\lambda_1-\delta/4, \lambda_1+\delta/4)$ be true.  Thus, we will have that there exists some unit norm ${\bf r} \in \mathbbm{C}^d$ such that
$$\frac{\left( \tilde{X} - \mu I \right)^{-k}{\bf e}_1}{\left\| \left( \tilde{X} - \mu I \right)^{-k}{\bf e}_1 \right\|_2} = \frac{{\bf u}_1 + \sum^d_{j=2} \mathcal{O}\left( \left|\frac{\lambda_1 - \mu}{\lambda_j - \mu} \right|^k \right) {\bf u}_j}{1 + \mathcal{O}\left( \frac{d}{9^k} \right)} = {\bf u}_1 + \mathcal{O} \left( \frac{d}{3^k} \right) {\bf r}$$ holds for any given integer $k = \Omega \left( \log_3 d \right)$.} 
for a total flop count of $\mathcal{O}(\delta^2 \cdot d \log d)$ for line 3 in that case.  In total, then, one can see that Algorithm~\ref{alg:phaseRetrieval1} will always require just $\mathcal{O}(\delta^2 \cdot d \log d + d \cdot \delta^3)$ total flops in low-noise settings.  Furthermore, in all such settings a measurement mask support of size $\delta = \mathcal{O}(\log d)$ appears to suffice.

\subsection{Connection to Ptychography}

\begin{figure}[hbtp]
    \includegraphics[scale=0.25]{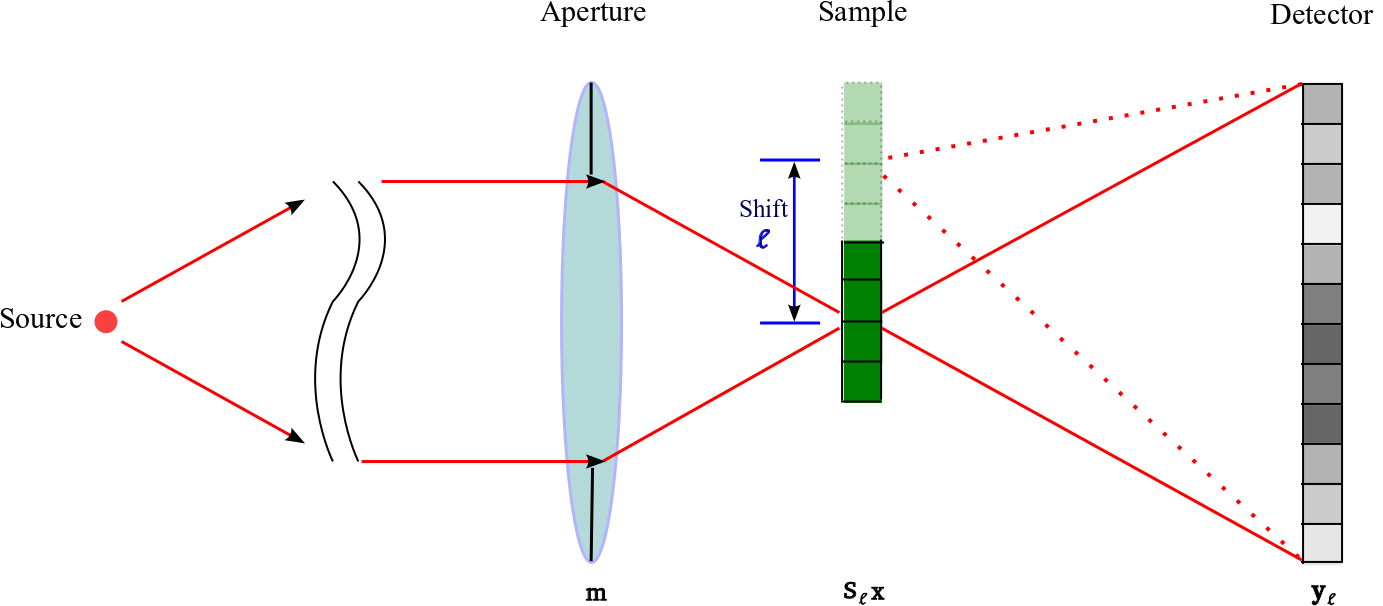}
    \caption{Illustration of one-dimensional ptychographic imaging {\small 
        (Adapted from ``Fly-scan ptychography'', Huang et al., 
        Scientific Reports 5 (9074), 2015.)} }
    \label{fig:ptychography_setup}
\end{figure}

In ptychographic imaging (see Fig.
\ref{fig:ptychography_setup}), small regions of a specimen are
illuminated one at a time and an intensity\footnote{By intensity,
we mean magnitude squared.} detector captures each of the
resulting  diffraction patterns. Thus each of the ptychographic
measurements is a local measurement, which under certain
assumptions (e.g., appropriate wavelength of incident radiation,
far-field Fraunhofer approximation), can be modeled as
\cite{Goodman2005IntroFourierOptics,Dierolf2008Ptych}
\begin{equation}
    y(t, \omega) = \left \vert \mathcal F [ \widetilde h \cdot S_t f ] (\omega) 
       \right \vert^2 + \eta(t, \omega) .\label{eq:ptych}
\end{equation} 
%
%
Here, $\mathcal F$ denotes the Fourier transform, $f : [0, 1] \to \C$ represents
the unknown test specimen, $S_t$ is the shift operator defined
via $$(S_t f)(s) := f(s + t),$$ and $\widetilde h : [0, 1] \to \C$ is the so-called
illumination function 
\cite{arXiv:1105.5628_MarchesiniEtAl}
of the imaging system. 
To account for the local nature of the measurements in
\eqref{eq:ptych}, we assume that $\text{supp}(\widetilde h)
\subset \text{supp} (f)$.

As the phase retrieval problem is inherently non-linear and requires sophisticated computer algorithms to solve, consider the discrete version of \eqref{eq:ptych}, with $\widetilde{\m}, \x_0 \in \C^d$ discretizing $\widetilde{h}$ and $f$.  Thus \eqref{eq:ptych}, in the absence of noise, becomes 
\begin{equation}
    (\y_\ell)_j = \left \vert \sum_{n=1}^d \widetilde m_n \, (x_0)_{n+\ell} \,
      \mathbbm e^{-\frac{2\pi \mathbbm i (j-1) (n-1)}d} 
      \right \vert^2, \quad (j,\ell) \in [d] \times  
      [d]_0, 
  \label{eq:1d_ptycho}
\end{equation}
where indexing is
considered modulo-$d$, so $(\y_\ell)_j$
is a diffraction measurement corresponding to the $j^{th}$ Fourier mode
of a circular $\ell$-shift of the specimen. We use circular shifts for convenience and we remark that this is appropriate as one can zero-pad $\x_0$ and $\widetilde{\m}$  in \eqref{eq:1d_ptycho} and obtain the same $(\y_\ell)_j$ as one would with non-circular shifts. In practice, one may not need to use all the shifts $\ell \in [d]_0$ as a subset may suffice.  Defining $\m_j \in \C^d$ by \begin{equation} (\m_j)_n = \overline{\widetilde m_n} \, \mathbbm e^{\frac{2\pi \mathbbm i (j-1) (n-1)}d} \label{eq:ptychm} \end{equation} and rearranging \eqref{eq:1d_ptycho}, we obtain
\begin{align}
    (\y_\ell)_j &= \left \vert \sum_{n=1}^d (x_0)_{n+\ell} \, \overline{(\m_j)_n} \right \vert^2 \label{eq:loco_measurements} 
        = \left \vert \sum_{n=1}^\delta (x_0)_{n+\ell} \, \overline{(\m_j)_n} \right \vert^2 \\
        &= \lvert \langle S_\ell \x_0, \m_j \rangle \rvert^2 \notag 
       = \langle S_\ell \x_0 \x_0^* S_\ell^*, \m_j \m_j^* \rangle \notag\\
        &= \langle T_\delta(\x_0 \x_0^*), S_\ell^* \m_j \m_j^* S_\ell \rangle, \quad (j, \ell) \in [d] \times [d]_0 \notag
\end{align}
where the second and last equalities follow from the fact that $\widetilde \m$ (and hence each $\m_j$) is locally supported.  
We note that \eqref{eq:loco_measurements}
defines a correlation with local masks or window functions
$\m_j$.  More importantly, \eqref{eq:loco_measurements} shows that ptychography (with $\ell$ ranging over any subset of $[d]_0$) represents a case of the general system seen in \eqref{eq:lifted_system}.

\subsection{Connections to Masked Fourier Measurements}
\label{sec:STFT}
%
%
Often, in imaging applications involving phase retrieval, a mask is placed either between the illumination source and the sample or between the sample and the sensor. Here, we will see that the mathematical setup that we consider is applicable in this scenario, albeit when the masks are band-limited.  As before, let $\x_0, \m \in \mathbbm C^d$ denote the unknown signal of
interest, and a known mask (or window), respectively. Moreover, for a vector $\x_0 \in \C^d$ we denote its discrete Fourier transform $\widehat{\x_0} \in \mathbbm{C}^d$ by $$(\widehat{x_0})_k :=\sum_{n=1}^d (x_0)_n e^{-2\pi \mathbbm{i} (n-1)(k-1)/d}.$$  Here, we consider squared magnitude \emph{windowed Fourier
transform} measurements of the form 
\begin{equation}
  ({\y_\ell})_k = \left \vert \sum_{n=1}^d (x_0)_n \, m_{n-\ell} \,
      \mathbbm e^{-\frac{2\pi \mathbbm i (k-1) (n-1)}d} 
      \right \vert^2, ~ k \in [d],
      ~ \ell \in \{\ell_1, \dots, \ell_L\} \subset 
        [d]_0.
  \label{eq:STFT_measurements}
\end{equation}
%
As before, $\ell$ denotes a shift or translation of the mask/window, so
 $({\y_\ell})_k$ corresponds to the (squared magnitude of) the
$k^{th}$ Fourier mode associated with an
$\ell$-shift\footnote{As above, all indexing and shifts are considered
modulo-$d$.} of the mask $\m$. Defining the modulation operator, $W_k: \C^d \mapsto \C^d$,  by its action $(W_k \x_0)_n = e^{2\pi \mathbbm i (k-1)(n-1)/d}~(x_0)_n$ and applying elementary Fourier transform properties\footnote{$\widehat{S_\ell \x_0}=W_{\ell+1}\widehat{\x_0}$, $\widehat{W_k \x_0}=S_{-k+1}\widehat{\x_0}$, and $W_k S_\ell \x_0 = e^{-2\pi \mathbbm i (k-1)\ell/d} S_\ell W_k \x_0$.}
 one has
\begin{align}
    (\y_\ell)_k &=  \lvert \langle  \x_0, S_{-\ell}(e^{2\pi \mathbbm i (k-1)\ell/d}W_k\overline{\m} )\rangle \rvert^2 \notag \\
       & =  \lvert \langle  \x_0, S_{-\ell}(W_k \overline{\m} )\rangle \rvert^2 \notag 
       =  \lvert \langle  \widehat{\x_0}, \widehat{S_{-\ell}(W_{k} \overline{\m} )}\rangle \rvert^2 \notag \\
       & = \lvert \langle  \widehat{\x_0}, W_{-\ell+1}(S_{-k+1}\widehat{\overline{\m}} )\rangle \rvert^2 \notag \\ 
       &=\lvert \langle  \widehat{\x_0}, S_{-k+1}(W_{-\ell+1}\widehat{\overline{\m}} )\rangle \rvert^2.
       \end{align}

Defining $\widehat{\m}_\ell := W_{-\ell+1}\widehat{\overline{\m}}$ and assuming that $\supp(\widehat{\overline{\m}})\subset [\delta]$ (e.g., assuming that $\m$ is real-valued and band-limited), we now have that
\begin{align}
    (\y_\ell)_{k}   &= \langle \widehat\x_0 \widehat\x_0^*, S_{-k+1} \widehat{\m}_\ell \widehat{\m}_\ell^* S_{-k+1}^*\rangle \notag\\
   &= \langle T_\delta(\widehat\x_0 \widehat\x_0^*), S_{-k+1} \widehat{\m}_\ell \widehat{\m}_\ell^* S_{-k+1}^*\rangle, \notag
\end{align}
which again represents a case of the general system seen in \eqref{eq:lifted_system}. Moreover, our results all hold for this setting, albeit with the Fourier transforms of signals and conjugated masks.

\subsection{Related Work}
\label{sec:lit}



The first approaches to the phase retrieval problem were proposed in the 1970's in \cite{gerchberg1972practical} by Gerchberg and Saxton, and were famously improved in \cite{fienup1978reconstruction} later that decade.  Though these techniques work well in practice and have been popular for decades, they are notoriously difficult to analyze.  These iterative methods work by improving an initial guess until they stagnate.  Recently Marchesini et al.~proved that alternating projection schemes using generic measurements are guaranteed to converge to the correct solution {\em if provided with a sufficiently accurate initial guess} and algorithms for ptychography were explored in particular \cite{marchesini2015alternating}.  However, no global recovery guarantees currently exist for alternating projection techniques using local measurements (i.e., finding a sufficiently accurate initial guess is not generally easy).

Other authors have taken to proving probabilistic recovery guarantees when provided with globally supported Gaussian measurements.  Methods for which such results exist vary in their approach, and include convex relaxations \cite{candes2014solving,candes2013phaselift}, gradient descent strategies \cite{candes2015phase}, graph-theoretic  \cite{alexeev2014phase} and frame-based approaches \cite{balan2009painless, bodmann2013stable}, and variants on the alternating minimization (e.g., with resampling) \cite{netrapalli2013phase}.

Several recovery algorithms achieve theoretical recovery guarantees while using at most $D = \mathcal{O}(d \log^4 d)$ masked Fourier coded diffraction pattern measurements, including both {\em PhaseLift} \cite{Candes2014WF,gross2015improved}, and {\em Wirtinger Flow} \cite{candes2015phase}.  However, these measurements are both randomized (which is crucial to the probabilistic recovery guarantees developed for both PhaseLift and Wirtinger Flow -- deterministic recovery guarantees do not exist for either method in the noisy setting), and provide global information about $\x_0$ from each measurement (i.e., the measurements are not locally supported).

Among the first treatments of local measurements are \cite{eldar2014sparse,BendoryE16} and \cite{jaganathan2015stft}, in which it is shown that STFT measurements with specific properties can allow (sparse) phase retrieval in the noiseless setting, and several recovery methods are proposed.  Similarly, the phase retrieval approach from \cite{alexeev2014phase} was extended to STFT measurements in \cite{salanevich2015polarization} in order to produce recovery guarantees in the noiseless setting.  More recently, randomized robustness guarantees were developed for time-frequency measurements in \cite{pfander2016robust}.  However, no {\it deterministic} robust recovery guarantees have been proven in the noisy setting for any of these approaches.  Furthermore, none of the algorithms developed in these papers are empirically demonstrated to be competitive numerically with standard alternating projection techniques for large signals when utilizing windowed Fourier and/or correlation-based measurements.  In \cite{IVW2015_FastPhase}, the authors propose the measurement scheme developed in the current paper and prove the first deterministic robustness results for a different greedy recovery algorithm.





\subsection{Organization}  Section \ref{sec:MeasMatrix} discusses two collections of local correlation masks $\m_j$, one of which is novel and the other of which was originally studied in \cite{IVW2015_FastPhase}.  Most importantly, Section \ref{sec:MeasMatrix} shows that the recovery of $T_\delta(\x_0\x_0^*)$ from measurements associated with the proposed masks can be done stably in the presence of measurement noise. 
Moreover, since in the noisy regime, the leading eigenvector $\widetilde{\x}$ of $\widetilde{X}$ (associated with line 3 of Algorithm \ref{alg:phaseRetrieval1}) will no longer correspond exactly to the true phases $\widetilde{\x}_0$, we are interested in a perturbation theory for the eigenvectors of $\widetilde{X}_0$.  Intuitively, $\widetilde{\x}$ will be most accurate when the eigenvalue of $\widetilde X_0$ associated with $\widetilde{\x}_0$ is well separated from the rest of the eigenvalues and so, accordingly, Section~\ref{sec:Spectrum} studies the spectrum of $\widetilde X_0$.  Indeed, this eigenvalue is rigorously shown to control the stability of the top eigenvector of $\widetilde{X}_0$ with respect to noise, and Section~\ref{sec:Perturb} develops perturbation results concerning their top eigenvectors by adapting the spectral graph techniques used in \cite{alexeev2014phase}.  Recovery guarantees for the proposed phase retrieval method are then compiled in Section~\ref{sec:RecovGuarantee}.  Numerical results demonstrating the accuracy, efficiency, and robustness of the proposed methods are finally provided in Section \ref{sec:NumEval}, while Section \ref{sec:conclusion} contains some concluding remarks and avenues for further research. In the appendix, we provide an alternate, weaker but easier to derive eigenvector perturbation result analogous to the one in Section \ref{sec:Perturb} which may be of independent interest.

\section{Well-conditioned measurement maps}
\label{sec:MeasMatrix}
Here, we present two example constructions for which the linear operator $\mathcal{A}|_{T_\delta(\C^{d\times d})}$ used in Step 1 of Algorithm \ref{alg:phaseRetrieval1} is well conditioned. Such constructions are crucial for the stability of the method to additive noise. 

\subsection*{Example 1:} 
In \cite{IVW2015_FastPhase}, a construction was proposed for the masks $\m_\ell$ in \eqref{eq:shift_model} that guarantees the stable invertibility of $\mathcal{A}$.  This construction comprises windowed Fourier measurements with parameters $\delta \in \mathbbm{Z}^+$ and $a \in [4, \infty)$ corresponding to the $2\delta-1$ masks $\m_j\in\C^d$, $j=1,...,2\delta-1$ with entries given by
\begin{equation}
(\m_j)_{n} = \left\{ \begin{array}{ll}
    \frac{\mathbbm{e}^{-n/a}}{\sqrt[4]{2\delta -1}} \cdot
    \mathbbm{e}^{\frac{2 \pi \mathbbm{i} \cdot (n - 1) \cdot (j -1)}
    {2\delta - 1}} & \textrm{if}~n \leq \delta \\ 
    0 & \textrm{if}~n >  \delta\end{array} \right. .
    \label{eq:MeasDef}
\end{equation}
Here, measurements using all shifts $\ell = 1,..., d$ of each mask are taken.  In the notation of \eqref{eq:lifted_system}, this corresponds to  $K = 2\delta - 1$ and $P = [d]_0$, which yields $D = (2\delta - 1)d$ total measurements.  By considering the basis $\{E_{ij}\}$ for $T_\delta(\C^{d \times d})$ given by 
\begin{align}\nonumber
E_{i,j}(s,t) =\left\{\begin{array}{l} 1, \quad (i,j)=(s,t)\\ 0, \quad \text{otherwise}\end{array}\right.
\end{align}
it was shown in \cite{IVW2015_FastPhase} that this system is both well conditioned and rapidly invertible.  In particular, if $M'$ is the matrix representing the measurement mapping $\mathcal{A} : T_{\delta}(\C^{d \times d}) \to T_{\delta}(\C^{d \times d})$ with respect to the basis $\{E_{ij}\}$, the following holds.

\begin{thm}[\cite{IVW2015_FastPhase}]
Consider measurements of the form \eqref{eq:MeasDef} with $a:= \max \left\{ 4,~\frac{\delta - 1}{2} \right\}$. 
Let $M' \in \mathbbm{C}^{D \times D}$ be the matrix representing the measurement mapping $\mathcal{A} : T_{\delta}(\C^{d \times d}) \to T_{\delta}(\C^{d \times d})$ with respect to the basis $\{E_{ij}\}$.  Then, the condition number of $M'$ satisfies

%
$$\kappa \left( M' \right) ~<~ \max
\left\{ 144 \ee^2,~\frac{9 \ee^2}{4} \cdot (\delta -
1)^2 \right \},$$
and the smallest singular value of $M'$ satisfies
$$\sigma_{\rm min} \left( M' \right) > \frac{7}{20 a} \cdot \ee^{-(\delta + 1)/a} > \frac{C}{\delta}$$
for an absolute constant $C \in \R^+$.  Furthermore, $M'$ can be
inverted in $\mathcal{O} \left( \delta \cdot d \log d \right)$-time.
\label{thm:WellCondMeas}
\end{thm}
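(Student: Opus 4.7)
The plan is to use the heavy symmetry of the measurement operator together with the Fourier modulation built into the masks of \eqref{eq:MeasDef} to explicitly block-diagonalize $M'$ via two discrete Fourier transforms, reducing the spectral analysis to inspecting $(2\delta-1)\times(2\delta-1)$ matrices whose singular values have closed-form expressions.

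First, I would write out a generic entry of $M'$. Using $\langle E_{i,\,i+k}, S_\ell^* \m_j \m_j^* S_\ell \rangle = \overline{(m_j)_{i-\ell}}\,(m_j)_{i+k-\ell}$ (indices mod $d$) and substituting the mask formula \eqref{eq:MeasDef}, the entry depends on $(i,\ell)$ only through $p := i - \ell \pmod d$:
\[ M'_{((j,\ell),(i,k))} \;=\; \frac{\ee^{-(2p+k)/a}}{\sqrt{2\delta - 1}}\cdot \ee^{2\pi \ii\, k (j-1)/(2\delta - 1)}, \]
nonzero exactly when $p,\, p+k \in [\delta]$. This shift-invariance identifies a block-circulant structure: grouping rows by $j$ and columns by $k$, each of the $(2\delta-1)^2$ blocks of size $d\times d$ is circulant in $(\ell, i)$. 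Applying the $d$-point DFT to the $\ell$ index (and its adjoint to $i$) block-diagonalizes $M'$ into $d$ blocks $\{B_\omega\}_{\omega \in [d]_0}$ of size $(2\delta-1)\times(2\delta-1)$. A direct computation further factors each such block as $B_\omega = F_{2\delta-1}\, \diag(g_\omega(k))$, where $F_{2\delta-1}$ is the unitary $(2\delta-1)$-point DFT (arising from the $j$-dependence $\ee^{2\pi \ii\, k(j-1)/(2\delta-1)}$) and
\[ g_\omega(k) \;=\; \frac{\ee^{-k/a}}{\sqrt{2\delta-1}}\,\sum_{p}\ee^{-2p/a}\, \ee^{2\pi \ii\, \omega p/d}, \]
with the sum taken over $p \in [\delta]$ satisfying $p+k \in [\delta]$. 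Since $F_{2\delta-1}$ is unitary, the singular values of $M'$ are exactly $\{|g_\omega(k)|\}_{\omega, k}$.

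Second, I would estimate the $|g_\omega(k)|$. At the extremes $k = \pm(\delta-1)$ the sum collapses to a single term, so $|g_\omega(\pm(\delta-1))| = \ee^{-(\delta+1)/a}/\sqrt{2\delta-1}$ for every $\omega$; this is the candidate minimum. For intermediate $k$, the sum is a truncated geometric series in $r := \ee^{-2/a}\, \ee^{2\pi \ii \omega/d}$; the formula $|(1-r^N)/(1-r)|$ combined with the hypothesis $a = \max\{4, (\delta-1)/2\}$ shows that $|g_\omega(k)|$ dominates the extreme value up to a $1/a$ factor (produced by bounding $|1-r|$ from below), yielding the stated lower bound $\sigma_{\min}(M') > \tfrac{7}{20a}\, \ee^{-(\delta+1)/a}$. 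An analogous triangle-inequality estimate for the same geometric sum gives the matching upper bound on $\sigma_{\max}(M')$. Taking the ratio and noting that with $a = \max\{4, (\delta-1)/2\}$ the quantity $\ee^{(\delta+1)/a}$ is bounded by an absolute constant produces the stated condition-number bound.

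Finally, the $\mathcal{O}(\delta \cdot d \log d)$ inversion time follows immediately from the same block-diagonalization: applying the DFT in the $\ell$-index across the $2\delta-1$ mask rows costs $\mathcal{O}(\delta\, d \log d)$, inverting each of the $d$ resulting blocks $B_\omega = F_{2\delta-1}\,\diag(g_\omega(k))$ costs $\mathcal{O}(\delta^2)$ (a diagonal scaling followed by one small inverse DFT of size $2\delta-1$), and an adjoint $d$-point DFT on the output costs another $\mathcal{O}(\delta\, d \log d)$. The main obstacle is the lower bound on $|g_\omega(k)|$ for intermediate $k$ with $\omega \neq 0$: in principle destructive interference in the partial geometric series could force $|g_\omega(k)|$ below the extreme value, so one must verify carefully that $|1 - r|$ cannot be too small. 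This is precisely where the constraint $a \geq 4$ and the geometric decay of the mask weights interact to produce the clean constants in the theorem.
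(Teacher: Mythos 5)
This theorem is imported from \cite{IVW2015_FastPhase} rather than proved in the present paper, and your proposal follows essentially the same route as that source: use the dependence of the entries on $i-\ell$ only (block-circulant structure) to block-diagonalize $M'$ with $d$-point DFTs, note that the Fourier modulation in $j$ turns each $(2\delta-1)\times(2\delta-1)$ block into a unitary DFT times a diagonal of truncated geometric sums, and bound those sums to obtain $\sigma_{\min}$, $\sigma_{\max}$, the condition number, and the $\mathcal{O}(\delta\cdot d\log d)$ inversion. Two bookkeeping points to fix in a full write-up: the factor $(2\delta-1)^{-1/2}$ coming from the product of the two mask normalizations is exactly the normalization of the unitary factor $F_{2\delta-1}$, so it must not also appear inside $g_\omega(k)$ (as written, your extreme value $\ee^{-(\delta+1)/a}/\sqrt{2\delta-1}$ is off by $\sqrt{2\delta-1}$ and would not be consistent with the stated $\sigma_{\min}$ bound); and in the lower bound the $1/a$ loss actually comes from $\left|1-r^N\right|\ge 1-\ee^{-2N/a}\ge 1-\ee^{-2/a}\gtrsim 1/a$ together with $\left|1-r\right|\le 2$, whereas bounding $\left|1-r\right|$ from below (by $1-\ee^{-2/a}$) is what is needed on the $\sigma_{\max}$ side of the estimate.
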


This theorem indicates that one can both efficiently and stably solve for $\x_0 \x_0^*$ using \eqref{eq:lifted_system} with the measurements given in \eqref{eq:MeasDef}.   This measurement scheme is also interesting because it corresponds to a ptychography system if we take the illumination function in \eqref{eq:1d_ptycho} to be $\widetilde{m}_n = \frac{\ee^{- i / a}}{\sqrt[4]{2 \delta - 1}}$ and assume that $d = k(2 \delta - 1)$ for some $k \in \N$. Then we may take the subset of the measurements \eqref{eq:ptychm} given by $j = (p -1)k + 1, \ p \in [2 \delta - 1]$ to obtain the masks specified in \eqref{eq:MeasDef}.  Consider that these assumptions may easily be met by zero-padding $\x_0$ until $d = k(2 \delta - 1)$ and simply ``throwing away'' all the measurements but those that correspond to $j = (p -1)k + 1, \ p\in [2\delta-1]$.

\subsection*{Example 2:} We provide a second deterministic construction that improves on the condition number of the previous collection of measurement vectors.  We merely set $\m_1 = e_1, \m_{2j} = e_1 + e_{j+1}$, and 
$\m_{2j + 1} = e_1 + i e_{j+1}$ for $j = 1, \ldots, \delta - 1$.  A simple induction shows that $\{S_\ell \m_j \m_j^* S_\ell^*\}_{\ell \in [d]_0, j \in [2k - 1]}$ is a basis for $T_k(\C^{d \times d})$, so if we take $\m_1, \ldots, \m_{2\delta - 1}$ for our masks we'll have a basis for $T_{\delta}(\C^{d \times d})$.  Indeed, if we let $$\mathcal{B} : T_k(\C^{d \times d}) \to \C^{\delta \times d}$$ be the measurement operator defined via 
$$\big(\mathcal{B}(X)\big)_{\ell, j} = \langle S_\ell \m_j \m_j^* S_\ell^*, X \rangle,$$
we can immediately solve for the entries of $X \in T_k(\H^{d \times d})$ from $\mathcal{B}(X) =: B$ by observing that \[\begin{array}{rcl}
X_{i,i} & = & B_{i - 1,1} \\
X_{i, i + k} & = & \frac{1}{2}B_{i - 1, 2k} + \frac{i}{2}B_{i - 1, 2k + 1} - \frac{1 + i}{2} (B_{i - 1, 1} + B_{i + k - 1, 1}), \end{array}\] where we naturally take the indices of $B$ mod $d$.  This leads to an upper triangular system if we enumerate $X$ by its diagonals; namely we regard $T_\delta(\H^{d \times d})$ as a $d(2 \delta - 1)$ dimensional vector space over $\R$ and set, for $i \in  [d]$ 
\[\begin{array}{ll}
z_{kd + i} =  \left\{\begin{array}{r@{,\quad}l} \operatorname{Re}(X_{i, i + k}) & 0 \le k < \delta \\ \operatorname{Im}(X_{i, i + k - \delta + 1}) & \delta \le k < 2\delta - 1 \end{array}\right., \ \ 
y_{kd + i}  =  \left\{\begin{array}{r@{,\quad}l} \mathcal{B}(X)_{i, 1} & k = 0 \\ \mathcal{B}(X)_{i, 2k} & 1 \le k < \delta \\  \mathcal{B}(X)_{i, 2(k - \delta + 1) + 1} & \delta \le k < 2 \delta - 1 \end{array}\right.
\end{array}. \] Then %
%
\[y = \begin{bmatrix} I_d & 0 & 0 \\ D & 2 I_{d(\delta - 1)} & 0 \\ D & 0 & 2 I_{d(\delta - 1)} \end{bmatrix}z =: Cz, \ \text{where} \ D = \begin{bmatrix} I_d + S \\ I_d + S^2 \\ \vdots \\ I_d + S^{\delta - 1} \end{bmatrix}.\]  Here the matrix $S$ is a $d\times d$ matrix representing the circular shift by one, $S_1$.  Since the matrix $C$ is upper triangular, its inverse is immediate: \[C^{-1} = \begin{bmatrix} I_d & 0 & 0 \\ -D / 2 & I_{d(\delta-1)} / 2 & 0 \\ -D / 2 & 0 & I_{d(\delta-1)} / 2 \end{bmatrix}.\]  To ascertain the condition number of $\mathcal{B}$
, then, all we need is the extremal singular values of $C$.  We bound the top singular value by considering 
\[\begin{array}{rcl} \sigma_{\max}(C) = \max\limits_{||w||^2 + ||v||^2 = 1} \left\lVert C\begin{bmatrix} w \\ v \end{bmatrix} \right\rVert  & = & \left\lVert \begin{bmatrix} w \\ D w \\ D w \end{bmatrix} + 2\begin{bmatrix} 0 \\ v \end{bmatrix} \right\rVert \\
& \le & \sqrt{||w||^2 + 2 ||w + Sw||^2 + \cdots + 2 ||w + S^{\delta - 1} w||^2} + ||2 v|| \\
& \le & 2\sqrt{2 \delta}||w|| + 2 ||v|| \le 2 + 2\sqrt{2\delta}.  \end{array}\]  By a nearly identical argument, we find \[\dfrac{1}{\sigma_{\min}(C)} = \sigma_{\max}(C^{-1}) \le 1/2 + \sqrt{2 \delta}\] so that the condition number is bounded by $\kappa(C) \le c\delta$ for some absolute constant $c$.
\section{The Spectrum of $\tilde{\X}_0$}
\label{sec:Spectrum}

Consider line 3 of Algorithm~\ref{alg:phaseRetrieval1}, which shows that we are trying to recover $\widetilde{\x}_0:= \frac{{\x_0}}{|\x_0|}$ via an eigenvector method. Here, we show that  $\widetilde{X}_0$ has $\widetilde{\x}_0$ as its top eigenvector and we investigate the spectral properties of $\widetilde{X}_0$ in this section.

To begin, consider $U = T_\delta(\mathbbm{1}\mathbbm{1}^*)$, i.e., 
\begin{equation}
U_{j,k} =  \left\{ \begin{array}{ll} 1 & \textrm{if}~| j - k |~{\rm mod}~d  < \delta \\ 0 & \textrm{otherwise} \end{array} \right..
\label{equ:NormedUmatSPG}
\end{equation}
Observe that $U$ is circulant for all $\delta$, so its eigenvectors are always discrete Fourier vectors.  Setting $\omega_j = \ee^{2\pi\ii\frac{j-1}{d}}$ for $j = 1, 2, \ldots, d$, one can also see that the eigenvalues of $U$ are given by 
\begin{equation}
\nu_j = \sum_{k = 1}^d (U)_{1,k}\omega_j^{k-1} = 1 + \sum_{k = 1}^{\delta - 1}\omega_j^k + \omega_j^{-k} ~=~ 1 + 2 \sum_{k = 1}^{\delta - 1}\cos\left(\bigfrac{2\pi (j-1) k}{d}\right),
\label{eqn:lambda}
\end{equation}
for all $j = 1, \dots, d$.  In particular, $\nu_1 = 2 \delta - 1$.  Set $\Lambda = \diag\{\nu_1, \ldots, \nu_{d}\}$ and let $F$ denote the unitary $d \times d$ discrete Fourier matrix with entries 
$$F_{j,k} := \frac{1}{\sqrt{d}} \ee^{2\pi\ii\frac{(j-1)(k-1)}{d}},$$ then $U = F \Lambda F^*$.

We consider that $\widetilde{X}_0$ and $U$ are similar; indeed $\widetilde{\X}_0 = \widetilde{D}_0 U \widetilde{D}_0^*$, where $\widetilde{D}_0 = \diag\{(\widetilde{x}_0)_1, \ldots, (\widetilde{x}_0)_d\}$.  Since $|(\widetilde{\x}_0)_j| = 1$ for each $j$, we have that $\widetilde{D}_0$ is unitary.  Thus the eigenvalues of $\widetilde{X}_0$ are given by \eqref{eqn:lambda}, and its eigenvectors are simply the discrete Fourier vectors modulated by the entries of $\widetilde{\x}_0$.  %
We now have the following lemma.

\begin{lem}
Let $\widetilde{\X}_0$ be defined as in \eqref{eq:X_0}.  Then $$\widetilde{\X}_0 = \widetilde{D}_0 F \Lambda F^* \widetilde{D}_0^*$$ where $F$ is the unitary $d \times d$ discrete Fourier transform matrix, $\widetilde{D}_0$ is the $d \times d$ diagonal matrix $\diag\{(\widetilde{x}_0)_1, \ldots, (\widetilde{x}_0)_d\}$, and $\Lambda$ is the $d \times d$ diagonal matrix $\diag\{\nu_1, \ldots, \nu_{d}\}$ where
$$\nu_j := 1 + 2 \sum_{k = 1}^{\delta - 1}\cos\left(\bigfrac{2\pi (j-1) k}{d}\right)$$
for $j = 1, \dots, d$.
\label{lem:spectrum}
\end{lem}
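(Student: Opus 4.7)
The plan is to peel the matrix $\widetilde{X}_0$ apart entry by entry, recognize that the remaining middle factor is circulant, and then read off its Fourier diagonalization.

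First I would compute $(\widetilde{X}_0)_{ij}$ directly from the definition. Whenever $|i-j|\bmod d<\delta$, the entry $T_\delta(\x_0\x_0^*)_{ij}=(x_0)_i\overline{(x_0)_j}$ has modulus $|(x_0)_i||(x_0)_j|$, so $(\widetilde{X}_0)_{ij}=\sgn((x_0)_i)\overline{\sgn((x_0)_j)}=(\tilde x_0)_i\overline{(\tilde x_0)_j}$, and otherwise it is zero. Comparing this with the definition of $U$ in \eqref{equ:NormedUmatSPG}, every entry of $\widetilde{X}_0$ equals $(\widetilde D_0)_{ii}\,U_{ij}\,(\widetilde D_0^*)_{jj}$, so $\widetilde{X}_0=\widetilde D_0\,U\,\widetilde D_0^*$. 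This is the one genuine content step and it is short.

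Next I would justify the diagonalization $U=F\Lambda F^*$. The matrix $U$ is circulant: its $(j,k)$ entry depends only on $(j-k)\bmod d$ because the condition $|j-k|\bmod d<\delta$ does. Every circulant matrix is diagonalized by the unitary DFT matrix $F$, with eigenvalues given by the DFT of its first row. Evaluating $\nu_j=\sum_{k=1}^{d}U_{1,k}\,\omega_j^{k-1}$ with $\omega_j=\ee^{2\pi\ii(j-1)/d}$ and recalling that $U_{1,k}=1$ precisely for $k\in\{1,\dots,\delta\}\cup\{d-\delta+2,\dots,d\}$, I would split the sum into these two index ranges, reindex the second one via $\omega_j^{d-m}=\omega_j^{-m}$, and pair the two sums to get
\[
\nu_j=1+\sum_{k=1}^{\delta-1}\bigl(\omega_j^k+\omega_j^{-k}\bigr)=1+2\sum_{k=1}^{\delta-1}\cos\!\left(\frac{2\pi(j-1)k}{d}\right),
\]
matching \eqref{eqn:lambda}. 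Setting $\Lambda=\diag\{\nu_1,\ldots,\nu_d\}$ gives $U=F\Lambda F^*$.

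Combining the two observations yields $\widetilde{X}_0=\widetilde D_0 F\Lambda F^*\widetilde D_0^*$, as claimed. There is no real obstacle here; the only point that requires care is the bookkeeping of the circular-distance condition $|j-k|\bmod d<\delta$, which determines both the support of $U$ and the symmetric pairing of positive and negative powers of $\omega_j$ that produces the cosine form of the eigenvalues. Since $\widetilde D_0$ is unitary (each $(\tilde x_0)_j$ has unit modulus), this similarity transformation also immediately confirms that the spectrum of $\widetilde{X}_0$ coincides with that of $U$, which is the fact the rest of the paper needs.
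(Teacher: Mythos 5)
Your proposal is correct and follows essentially the same route as the paper: factor $\widetilde{X}_0 = \widetilde{D}_0 U \widetilde{D}_0^*$ with $U = T_\delta(\mathbbm{1}\mathbbm{1}^*)$, note $U$ is circulant and hence diagonalized by the unitary DFT matrix, and compute the eigenvalues as the DFT of its first row, pairing $\omega_j^{k}$ with $\omega_j^{-k}$ to obtain the cosine form. The only implicit assumption (shared with the paper) is that the entries of $\x_0$ are nonvanishing, so that $\sgn((X_0)_{ij}) = (\tilde x_0)_i\overline{(\tilde x_0)_j}$ on the band.
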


We next estimate the principal eigenvalue gap of $\widetilde{\X}_0$.  This information will be crucial to our understanding of the stability and robustness of Algorithm~\ref{alg:phaseRetrieval1}.  

\subsection{The Spectral Gap of $\widetilde{\X}_0$}

Set $\theta_j = \frac{2 \pi j}{d}$ and begin by observing that, for any $\theta \in \R$,
\begin{eqnarray*}
  \sum_{k = 1}^{\delta - 1}\cos(\theta k) 
  & = & \bigfrac{1}{2}\left(\bigfrac{\sin(\theta(\delta - 1/2))}{\sin(\theta / 2)} - 1\right).
\end{eqnarray*}
Accordingly, defining $l_\delta : \R \to \R$ by $l_\delta(\theta) := 1 + 2\sum_{k=1}^{\delta - 1} \cos(\theta k)$ we have that
\begin{eqnarray}
  \nu_{j+1} 
  & = & l_\delta(\theta_j) ~=~ \bigfrac{\sin(\theta_j(\delta - 1/2))}{\sin(\theta_j / 2)}. \label{equ:EigenFormula}
\end{eqnarray}
Thus, the eigenvalues of $\widetilde{\X}_0$ are sampled from the $(\delta-1)^{\rm st}$ Dirichlet kernel.  
Of course, $\nu_1 = 2\delta - 1$ is the largest of these in magnitude, so the eigenvalue gap $\min_j \nu_1 - |\nu_j|$ is at most equal to
\begin{eqnarray*}
  \nu_1 - \nu_2 & = & (2\delta -  1) - \bigfrac{\sin(\pi / d (2\delta - 1))}{\sin(\pi / d)} \\
  & \le & (2 \delta - 1) - \bigfrac{\pi / d(2 \delta - 1) - \frac{1}{6}(\pi / d (2 \delta - 1))^3}{\pi / d} \\
  & = & \frac{1}{6}\left(\frac{\pi}{d}\right)^2(2 \delta - 1)^3 \label{equ:gap12}.
\end{eqnarray*}
Thus, $\nu_1 - |\nu_2 | \lesssim \frac{\delta^3}{d^2}$.  However, a lower bound on the spectral gap is more useful.  The following lemma establishes that the spectral gap is indeed $\sim \frac{\delta^3}{d^2}$ for most reasonable choices of $\delta < d$.

\begin{lem}
Let $\nu_1 = 2 \delta -1, \nu_2, \dots, \nu_d$ be the eigenvalues of $\widetilde{\X}_0$.  Then, there exists a positive absolute constant $C \in \R^+$ such that
$$\min_{j \in \{2, 3, \dots, d \} } (\nu_1 - |\nu_j| ) \geq C \frac{\delta^3}{d^2}$$
whenever $d \geq 4\delta$ and $\delta \geq 3$. 
\label{lem:EigGap}
\end{lem}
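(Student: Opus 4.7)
The strategy is to exploit the closed-form formula \eqref{equ:EigenFormula}, which expresses each non-principal eigenvalue as a value of the normalized Dirichlet kernel: for $m \in \{1, \ldots, d-1\}$ one has $\nu_{m+1} = l_\delta(\theta_m)$ with $\theta_m := 2\pi m/d$ and $l_\delta(\theta) = \sin((\delta-1/2)\theta)/\sin(\theta/2)$. Since $l_\delta$ is even and $2\pi$-periodic, $l_\delta(\theta_m) = l_\delta(\theta_{d-m})$, so it suffices to lower-bound $\nu_1 - |l_\delta(\theta_m)|$ for $m \in \{1, \ldots, \lfloor d/2 \rfloor\}$, equivalently for $\theta_m \in (0, \pi]$. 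I would then split this range at a threshold $\theta^* \asymp 1/\delta$ and treat the two sub-ranges separately.

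For small $\theta_m \leq \theta^* := 6/(2\delta-1)$, set $\alpha := 2\delta - 1$ and $y := \theta_m/2$ and write
\[ \nu_1 - l_\delta(\theta_m) \;=\; \frac{\alpha \sin y - \sin(\alpha y)}{\sin y}. \]
The constraint $y \leq 3/\alpha$ gives $\alpha y \leq 3 < \pi$, hence $\sin(\alpha y) > 0$ and $l_\delta(\theta_m) > 0$. Applying the uniform two-sided estimates $t - t^3/6 \leq \sin t \leq t - t^3/6 + t^5/120$ (valid for all $t \geq 0$) to the numerator yields
\[ \alpha \sin y - \sin(\alpha y) \;\geq\; \alpha y^3 \left[ \frac{\alpha^2 - 1}{6} - \frac{\alpha^4 y^2}{120}\right]. \]
For $y \leq 3/\alpha$ and $\alpha \geq 5$ (i.e., $\delta \geq 3$), the bracket exceeds a positive absolute-constant multiple of $\alpha^2$. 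Dividing by $\sin y \leq y$ and using $y \geq \pi/d$, I obtain $\nu_1 - l_\delta(\theta_m) \geq c\,\alpha^3 y^2 \geq C \delta^3/d^2$.

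For $\theta_m > \theta^*$ I would use only the crude bound $|l_\delta(\theta_m)| \leq 1/\sin(\theta_m/2)$. Since $\theta_m/2 \in (3/(2\delta-1),\,\pi/2]$, Jordan's inequality $\sin t \geq (2/\pi)t$ on $[0,\pi/2]$ yields $|l_\delta(\theta_m)| \leq \pi(2\delta-1)/6$, so
\[ \nu_1 - |l_\delta(\theta_m)| \;\geq\; \left(1 - \frac{\pi}{6}\right)(2\delta - 1) \;\geq\; c' \delta. \]
Because $d \geq 4\delta$ forces $\delta^3/d^2 \leq \delta/16$, this easily dominates $C \delta^3/d^2$, and combining the two cases finishes the proof.

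The main obstacle is calibrating the crossover $\theta^*$. Pushing it too large lets the quintic remainder $\alpha^4 y^2/120$ overwhelm the cubic gain $(\alpha^2 - 1)/6$ in Case 1, so the Taylor argument loses its leading term; pushing it too small leaves $1/\sin(\theta_m/2)$ comparable to $2\delta - 1$ itself in Case 2 and no gap is produced. The choice $\theta^* = 6/(2\delta - 1)$ sits in the window where both halves produce the advertised $\delta^3/d^2$ lower bound, and the hypotheses $\delta \geq 3$ and $d \geq 4\delta$ are precisely what is required to close the Case-1 Taylor estimate against its quintic remainder and to dominate the target $\delta^3/d^2$ by the order-$\delta$ surplus in Case 2.
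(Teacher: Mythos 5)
Your proposal is correct: I checked the Taylor estimates (both $t-t^3/6\le\sin t\le t-t^3/6+t^5/120$ hold for all $t\ge 0$), the bracket bound $(\alpha^2-1)/6-\alpha^4y^2/120\ge c\,\alpha^2$ under $y\le 3/\alpha$, $\alpha\ge 5$, the positivity of $l_\delta$ on the near regime (so the gap to $|\nu_j|$ is controlled there), and the domination of $\delta^3/d^2$ by the order-$\delta$ surplus in the tail regime using $d\ge 4\delta$. The skeleton matches the paper's: both split at a threshold $\asymp 1/\delta$ and dispatch the tail with the crude bound $|l_\delta(\theta)|\le 1/\sin(\theta/2)\lesssim\delta$ (the paper cuts at the first zero $2\pi/(2\delta-1)$ and gets $(2\delta-1)/4$; you cut at $6/(2\delta-1)$ and get $\pi(2\delta-1)/6$ via Jordan's inequality — immaterial). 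The genuine difference is in the near-zero regime: the paper first proves that $l_\delta$ is decreasing on $[0,2\pi/(2\delta-1))$ by a derivative argument (convexity of $\tan$ plus a sign analysis), which reduces that regime to the single quantity $\nu_1-\nu_2$, and then bounds $\nu_1-\nu_2\ge 2(\delta-1)\bigl(1-\cos(\pi\delta/d)\bigr)\ge\tfrac{\pi^2}{3}\tfrac{\delta^3}{d^2}$ using concavity of cosine (this is where the paper spends the hypothesis $d\ge 4\delta$); you instead bound the gap pointwise at every lattice angle in the near regime via the cubic/quintic sine estimates, obtaining $\nu_1-l_\delta(\theta_m)\gtrsim\delta^3\theta_m^2$ and minimizing at $\theta_1=2\pi/d$, with $d\ge 4\delta$ used only to make the tail regime dominate. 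Your route avoids the monotonicity lemma entirely and yields a slightly stronger pointwise statement (a quadratic-in-$\theta$ lower bound on the gap throughout the near regime); the paper's route keeps the remainder bookkeeping minimal and makes explicit that the nearest competitor is $\nu_2$. Either argument proves the lemma with an absolute constant.
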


\begin{proof}

Let $\theta_j = \frac{2 \pi j}{d}$.  We find the lower bound by considering that $\theta_j \in [\pi / d, 2\pi - \pi / d]$ for every $j > 0$, so 
\[\nu_1 - \max | \nu_j | \ge \nu_1 - \max_{\theta \in [\pi / d, 2\pi - \pi / d]} | l_\delta(\theta) | = (2 \delta - 1) - \max_{\theta \in [\pi / d, \pi]} | l_\delta(\theta) |,\]where we have used our eigenvalue formula from \eqref{equ:EigenFormula}, and the symmetry of $l_\delta$ about $\theta = \pi$.

We now show that $l_\delta$ is decreasing towards its first zero at $\theta = \frac{2 \pi}{2 \delta - 1}$ by considering the derivative \[l_\delta'(\theta) = \bigfrac{(\delta - 1/2)\cos((\delta - 1/2)\theta) \sin(\theta/2) - 1/2 \sin((\delta - 1/2)\theta) \cos(\theta / 2)}{\sin(\theta / 2) ^ 2},\] which is non-positive if and only if \[(2\delta - 1) \sin(\theta/2)\cos((\delta - 1/2)\theta)  \le  \sin((\delta - 1/2)\theta) \cos(\theta /2 ).\]  Since $\tan(\cdot)$ is convex on $[0,\, \pi / 2)$, this last inequality will hold for $\theta \in [0,\, \frac{\pi}{2\delta - 1})$.  For $\theta \in [\frac{\pi}{2\delta-1}, \frac{2\pi}{2\delta -1})$, $\cos((\delta - 1/2)\theta) \leq 0$ while the remainder of the terms are non-negative, so the inequality also holds.  Therefore,
\[\nu_1 - \max_{j > 1} |\nu_j| \ge (2 \delta - 1) - \max \left\{\nu_2, \max_{\theta \in [\frac{2\pi}{2\delta-1}, \pi]} |l_\delta(\theta)| \right\},\]
which permits us to bound $(2 \delta - 1) - \nu_2$ and $(2 \delta - 1) - \max\limits_{\theta \in [\frac{2\pi}{2\delta-1}, \pi]} |l_\delta(\theta)|$ separately.


For $\max\limits_{\theta \in [\frac{2\pi}{2\delta-1}, \pi]} |l_\delta(\theta)|$, we simply observe that \[\max\limits_{\theta \in [\frac{2\pi}{2\delta-1}, \pi]} |l_\delta(\theta)| \le \max\limits_{\theta \in [\frac{2\pi}{2\delta-1}, \pi]} \dfrac{1}{\sin(\theta / 2)} = \left( \sin \left(\frac{2\pi}{2\delta - 1} \right) \right)^{-1}  \le \bigfrac{2\delta - 1}{4},\] where the last line uses that $\bigfrac{2 \pi}{2\delta - 1} \le \pi / 2$ (since $\delta \ge 3$).  This yields $\nu_1 - \max\limits_{\theta \in [\frac{2\pi}{2\delta-1}, \pi]} |l_\delta(\theta)| \ge \frac{3}{4}(2 \delta - 1)$.

As for $\nu_2$, we have $\theta_1 \cdot (\delta - 1) \le \pi / 2$ (since $4(\delta - 1) \le d$). Thus, $\cos(\cdot)$ will be concave on $[0,\, \theta_1(\delta-1)]$.  Considering \eqref{eqn:lambda}, this will give $\sum_{k=1}^{\delta - 1}\cos(k\theta_1) \le (\delta - 1) \cos \left(\theta_1 \frac{\delta}{2} \right)$, so \[\begin{array}{rcl}
  \nu_1 - \nu_2 & \ge & 2(\delta - 1) \left(1 - \cos \left(\pi\frac{\delta}{d} \right) \right)\\
  & \ge & 2(\delta - 1) \left(\dfrac{(\pi \frac{\delta}{d})^2}{4}\right) \\
  & \ge & \dfrac{\pi^2}{3} \cdot \dfrac{\delta^3}{d^2}.
\end{array}\]
The stated result follows.  
\end{proof}

We are now sufficiently well informed about $\widetilde{\X}_0$ to consider perturbation results for its leading eigenvector.

\section{Perturbation Theory for $\tilde{\X}_0$}
\label{sec:Perturb}

 In this section we will use spectral graph theoretic techniques to obtain a bound on the error associated with recovering phase information using our method.  
In particular, we will adapt the proof of Theorem 6.3 from \cite{alexeev2014phase} to develop a bound for $\min_{\theta\in[0,2\pi]} \Vert \tx_0 - \mathbbm{e}^{\mathbbm{i}\theta}\tx \Vert_2$.  This approach involves considering both $\tX$ from Algorithm~\ref{alg:phaseRetrieval1} and $\widetilde{\X}_0$ from \eqref{eq:X_0} in the context of spectral graph theory, so we begin by defining essential terms.  The idea is to consider a graph whose vertices correspond to the entries of $\tx_0$ from \eqref{eq:tx_0}, and whose edges carry the relative phase data.\footnote{The interested reader is also referred to the appendix where more standard perturbation theoretic techniques are utilized in order to obtain a  weaker bound on the error associated with recovering phase information via the proposed approach.}
\\

We begin with an undirected graph $G = (V, E)$ with vertex set $V = \{1, 2, \dots, d\}$ and weight mapping $w: V \times V \to \R^+$, where $w_{ij} = w_{ji}$ and $w_{ij} = 0$ iff $\{i, j\} \notin E$.  The \textbf{degree} of a vertex $i$ is \[\deg(i) := \sum_{j ~{\rm s.t.}~ (i, j) \in E} w_{ij},\] and we define the \textbf{degree matrix} and \textbf{weighted adjacency matrix} of $G$ by \[D := \diag(\deg(i)) \ \text{and} \ W_{ij} := w_{ij},\] respectively.  The \textbf{volume} of $G$ is \[\vol(G) := \sum_{i \in V} \deg(i).\]  Finally, the \textbf{Laplacian} of $G$ is the $d \times d$ real symmetric matrix \[L := I - D^{-1/2} W D^{-1/2} = D^{-1/2}(D - W)D^{-1/2},\]
where $I \in \{ 0,1\}^{d \times d}$ is the identity matrix.  

When $G$ is connected, Lemma 1.7 of \cite{chungspectral} shows that the nullspace of $(D - W)$ is $\Span(\mathbbm{1})$, and the nullspace of $L$ is $\Span(D^{1/2}\mathbbm{1})$.  Observing that $D - W$ is diagonally semi-dominant, it follows from Gershgorin's disc theorem that $(D - W)$ and $L$ are both positive semidefinite.  Alternatively, one may also note that 
\[{\bf v}^*(D - W){\bf v} = \sum_{i \in V} \left(v_i^2\deg(i) - \sum_{j \in V} v_i v_j w_{ij}\right) = \frac{1}{2} \sum_{i, j \in V} w_{ij} (v_i - v_j)^2 \geq 0\]
holds for all ${\bf v} \in \mathbbm{R}^d$.  Thus, we may order the eigenvalues of $L$ in increasing order so that $0 = \lambda'_1 < \lambda'_2 \le \cdots \le \lambda'_n$.  
We then define the \textbf{spectral gap} of $G$ to be $\tau = \lambda'_2$.

Herein, though we will state the main theorem of this section more generally, we will only be interested in the case where the graph $G=(V,E)$ is the simple unweighted graph whose adjacency matrix is $\widetilde{U}$ from \eqref{equ:NormedUmatSPG}.  In this case we will have $W = \widetilde{U}$ and $D = (2 \delta - 1)I$.  We also immediately obtain the following corollary of Lemmas~\ref{lem:spectrum} and~\ref{lem:EigGap}.

\begin{cor}
Let $G$ be the simple unweighted graph whose adjacency matrix is $\widetilde{U}$ from \eqref{equ:NormedUmatSPG}.  Let $L$ be the Laplacian of $G$.  Then, there exists a bijection $\sigma:[d] \rightarrow [d]$ such that
$$\lambda'_{\sigma(j)} = 1 - \frac{1 + 2 \sum_{k = 1}^{\delta - 1}\cos\left(\bigfrac{2\pi (j-1) k}{d}\right)}{2 \delta - 1}$$
for $j = 1, \dots, d$.  In particular, if $d \geq 4(\delta - 1)$ and $\delta \geq 3$ then $\tau = \lambda'_2 > C'''\delta^2 / d^2$ for an absolute constant $C''' \in \mathbbm{R}^+$.
\label{cor:Gspectrum}
\end{cor}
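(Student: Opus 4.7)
The plan is to deduce the corollary from the two preceding lemmas by a single rescaling. First I would observe that $\widetilde{U}$ defined by \eqref{equ:NormedUmatSPG} is circulant and that every row contains exactly $2\delta-1$ ones (the diagonal together with the $2(\delta-1)$ off-diagonals at cyclic distance $1,\dots,\delta-1$). Consequently the graph $G$ is $(2\delta-1)$-regular with degree matrix $D = (2\delta-1)I$, and the normalized Laplacian collapses to the scalar expression
\[
L \;=\; I - D^{-1/2}\widetilde{U}D^{-1/2} \;=\; I - \frac{\widetilde{U}}{2\delta-1}.
\]
In particular each eigenpair $(\mu,\mathbf{v})$ of $\widetilde{U}$ yields an eigenpair $(1-\mu/(2\delta-1),\mathbf{v})$ of $L$, so the spectra of $L$ and $\widetilde{U}$ are in bijection via $\mu \mapsto 1-\mu/(2\delta-1)$.

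Next I would invoke Lemma~\ref{lem:spectrum}. Since $\widetilde{X}_0 = \widetilde{D}_0 F\Lambda F^* \widetilde{D}_0^* = \widetilde{D}_0 \widetilde{U} \widetilde{D}_0^*$ with $\widetilde{D}_0$ diagonal unitary, the matrices $\widetilde{X}_0$ and $\widetilde{U}$ are unitarily similar and hence share the spectrum $\{\nu_j\}_{j=1}^d$ given by the Dirichlet-kernel formula. Substituting $\mu = \nu_j$ into $1-\mu/(2\delta-1)$ produces exactly the formula claimed for $\lambda'_{\sigma(j)}$; the role of $\sigma$ is then just to relabel these $d$ values into the nondecreasing convention $0 = \lambda'_1 \le \lambda'_2 \le \cdots \le \lambda'_d$. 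One automatically has $\sigma(1)=1$, since $\nu_1 = 2\delta-1$ gives $\lambda'=0$ and $L \succeq 0$ admits no smaller eigenvalue.

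For the spectral gap bound I would write the second smallest Laplacian eigenvalue as
\[
\tau \;=\; \lambda'_2 \;=\; \frac{\nu_1 - \max_{j\neq 1} \nu_j}{2\delta-1} \;\geq\; \frac{\nu_1 - \max_{j\neq 1} |\nu_j|}{2\delta-1},
\]
and then apply Lemma~\ref{lem:EigGap} (noting that its proof actually only uses the weaker hypothesis $d \geq 4(\delta-1)$ stated in the corollary, since the only place the assumption enters is the inequality $\theta_1(\delta-1)\le \pi/2$) to lower-bound the numerator by $C\,\delta^3/d^2$. Dividing by $2\delta-1$ absorbs one factor of $\delta$ and leaves $\tau \geq C'''\,\delta^2/d^2$ for an absolute constant $C'''$.

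There is essentially no real obstacle here; the corollary is a substitution exercise once the scalar rescaling between $\widetilde{U}$ and $L$ is in hand. The only bit of bookkeeping worth flagging is the convention that makes $\widetilde{U}$, whose diagonal is identically $1$, into the adjacency matrix of a $(2\delta-1)$-regular graph: treating those diagonal entries as self-loops each contributing $1$ to the degree is what produces the clean $D = (2\delta-1)I$ and permits the rescaling above.
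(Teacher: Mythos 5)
Your proposal is correct and follows the same route the paper intends: the paper offers no explicit proof, stating only that the corollary is immediate from Lemmas~\ref{lem:spectrum} and~\ref{lem:EigGap}, and your argument ($D=(2\delta-1)I$ via the self-loop convention, so $L = I - U/(2\delta-1)$ and the Laplacian spectrum is the rescaled Dirichlet-kernel spectrum, with the gap bound divided by $2\delta-1$) is exactly that deduction. Your observation that the proof of Lemma~\ref{lem:EigGap} only uses $d\ge 4(\delta-1)$, reconciling the corollary's slightly weaker hypothesis with the lemma's stated $d\ge 4\delta$, is also correct.
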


Using this graph $G$ as a scaffold we can now represent our computed relative phase matrix $\tX$ from Algorithm~\ref{alg:phaseRetrieval1} by noting that for some (Hermitian) perturbations $\eta_{ij}$ 
we will have 
\begin{equation}
\tX_{ij} = \bigfrac{(x_0)_i(x_0)_j^* + \eta_{ij}}{|(x_0)_i(x_0)_j^* + \eta_{ij}|} \cdot w_{ij} = \bigfrac{(x_0)_i(x_0)_j^* + \eta_{ij}}{|(x_0)_i(x_0)_j^* + \eta_{ij}|} \cdot \chi_{E(i, j)}.
\label{equ:DeftildeXviaGcomp}
\end{equation}
 Using this same notation we may also represent our original phase matrix $\widetilde{\X}_0$ 
via $G$ by noting that
\begin{equation}
(\widetilde{\X}_0)_{ij} = \bigfrac{(x_0)_i(x_0)_j^*}{|(x_0)_i(x_0)_j^*|} \cdot w_{ij} = \sgn \left( (x_0)_i(x_0)_j^* \right) \cdot \chi_{E(i, j)}.
\label{equ:DeftildeXviaGpure}
\end{equation}


We may now define the \textbf{connection Laplacian} of the graph $G$ associated with the Hermitian and entrywise normalized data given by $\tX$ to be the matrix 
\begin{equation}
L_1 = I - D^{-1/2} (\tX \circ W) D^{-1/2},
\label{equ:ConnectLaplace}
\end{equation}
where $\circ$ denotes entrywise (Hadamard) multiplication.
Following \cite{Cheeger}, given $\tX$ and a vector ${\bf y} \in \mathbbm{C}^d$, we  define the \textbf{frustration of} $\bf y$ \textbf{with respect to} $\tX$ by \begin{equation} \eta_{\tX}({\bf y}) := \bigfrac{1}{2}\bigfrac{\sum_{(i, j) \in E} w_{ij}|y_i - \tX_{ij} y_j |^2}{\sum_{i \in V} \deg(i) |y_i|^2} = \bigfrac{{\bf y}^* (D - (\tX \circ W)) {\bf y}}{{\bf y}^* D {\bf y}}. \label{eq:frustration} \end{equation}
We may consider $\eta_{\tX}({\bf y})$ to measure how well ${\bf y}$ (viewed as a map from $V$ to $\mathbbm{C}$) conforms to the computed relative phase differences $\tX$ across the graph $G$. 

%



In addition, we adapt a result from \cite{Cheeger}:

\begin{lem}[Cheeger inequality for the connection Laplacian]  
Suppose that $G = (V=[d], E)$ is a connected graph with degree matrix $D \in [0,\infty)^{d \times d}$, weighted adjacency matrix $W \in [0,\infty)^{d \times d}$, and spectral gap $\tau > 0$, and that $\tX \in \mathbbm{C}^{d \times d}$ is Hermitian and entrywise normalized.  Let ${\bf u} \in \mathbbm{C}^d$ be an eigenvector of $L_1$ from \eqref{equ:ConnectLaplace} corresponding to its smallest eigenvalue.  Then, ${\bf w} = \sgn({\bf u}) = \sgn \left( D^{-1/2}{\bf u} \right)$ satisfies \[\eta_{\tX}({\bf w}) \le \bigfrac{C'}{\tau} \cdot \min_{{\bf y} \in \mathbbm{C}^d} \eta_{\tX}( \sgn({\bf y}) ),\] where $C' \in \mathbbm{R}^+$ is a universal constant.
\label{lem:CheegerInequality}
\end{lem}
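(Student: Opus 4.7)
The plan is to combine the variational characterization of the smallest eigenvalue of $L_1$ with a Cheeger-type rounding argument that follows \cite{Cheeger}. As a preliminary, substituting $\mathbf{z} = D^{1/2}\mathbf{y}$ into the Rayleigh quotient of $L_1$ yields
\[ \frac{\mathbf{z}^* L_1 \mathbf{z}}{\mathbf{z}^*\mathbf{z}} = \frac{\mathbf{y}^* (D - \tX\circ W)\mathbf{y}}{\mathbf{y}^* D \mathbf{y}} = \eta_{\tX}(\mathbf{y}), \]
so that $\lambda_1(L_1) = \min_{\mathbf{y}\neq 0}\eta_{\tX}(\mathbf{y})$, attained at $\mathbf{v} := D^{-1/2}\mathbf{u}$. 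Since $D$ is a positive diagonal matrix, $\mathbf{w} = \sgn(\mathbf{u}) = \sgn(\mathbf{v})$. Moreover, $\lambda_1(L_1) \leq \min_{\mathbf{y}\in\C^d}\eta_{\tX}(\sgn(\mathbf{y}))$ holds trivially since signed vectors form a subset of all vectors, so it suffices to prove the single bound $\eta_{\tX}(\mathbf{w}) \leq (C'/\tau)\,\lambda_1(L_1)$.

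The main step is the rounding argument. For every edge $\{i,j\}\in E$, the hypothesis that $\tX$ is Hermitian and entrywise unimodular yields the pointwise identity
\[ |v_i - \tX_{ij}v_j|^2 = (|v_i|-|v_j|)^2 + |v_i|\,|v_j|\,|w_i - \tX_{ij}w_j|^2. \]
Summing against $w_{ij}$ produces $\mathbf{v}^*(D - \tX\circ W)\mathbf{v}$ on the left, and on the right the Dirichlet form $|\mathbf{v}|^*(D-W)|\mathbf{v}|$ of the ordinary Laplacian evaluated at the nonnegative vector $|\mathbf{v}|$, plus a magnitude-weighted sum whose unweighted analogue is exactly $\mathbf{w}^*(D - \tX\circ W)\mathbf{w} = \vol(G)\cdot\eta_{\tX}(\mathbf{w})$. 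This decomposition transfers control of $\eta_{\tX}(\mathbf{v})$ to control of $\eta_{\tX}(\mathbf{w})$, modulo the magnitude weights $|v_i|\,|v_j|$.

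The final and hardest step is to strip those magnitude weights at a cost of only a factor $1/\tau$. Here one invokes the Poincar\'e inequality associated to $\tau$: for every real $\mathbf{r}\in\R^d$ with $\mathbbm{1}^* D\mathbf{r}=0$ one has $\mathbf{r}^*(D-W)\mathbf{r} \geq \tau\,\mathbf{r}^* D\mathbf{r}$. Applying this to $|\mathbf{v}|$ minus its $D$-weighted mean shows that $|\mathbf{v}|$ is essentially uniform in magnitude unless $\mathbf{v}^*(D-W)\mathbf{v}$ is already large. A level-set/averaging argument in the spirit of the classical Cheeger inequality then replaces the magnitude-weighted phase sum by an unweighted one, producing
\[ \eta_{\tX}(\mathbf{w}) \leq \frac{C'}{\tau}\,\lambda_1(L_1) \leq \frac{C'}{\tau}\,\min_{\mathbf{y}\in\C^d}\eta_{\tX}(\sgn(\mathbf{y})), \]
as claimed. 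The main obstacle is this last transfer: a naive rounding would pick up a factor of $1/\min_i |v_i|^2$ with no a priori control, and extracting the clean $1/\tau$ dependence rather than $1/\tau^2$ requires carefully pairing the level-set argument with the Rayleigh-quotient characterization of $\tau$. This is the technical heart of the connection-Laplacian Cheeger inequality adapted from \cite{Cheeger}.
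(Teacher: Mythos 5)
Your first step---the substitution $\mathbf{z}=D^{1/2}\mathbf{y}$ showing that the smallest eigenvalue of $L_1$ equals $\inf_{\mathbf{y}\neq 0}\eta_{\tX}(\mathbf{y})\le\min_{\mathbf{y}}\eta_{\tX}(\sgn(\mathbf{y}))$, attained at $\mathbf{v}=D^{-1/2}\mathbf{u}$ with $\sgn(\mathbf{u})=\sgn(\mathbf{v})$---is exactly the first half of the paper's proof, and your reduction to the single bound $\eta_{\tX}(\mathbf{w})\le (C'/\tau)\,\eta_{\tX}(\mathbf{v})$ is precisely the right target. The difference is in how that bound is obtained: the paper does not prove it at all, but invokes it as a black box, namely Lemma 3.6 of \cite{Cheeger}, which states $\eta_{\tX}(\sgn(\mathbf{u}))\le \frac{44}{\tau}\,\eta_{\tX}(D^{-1/2}\mathbf{u})$; the whole proof is then two lines. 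You instead set out to re-derive that rounding lemma, and this is where your proposal has a genuine gap.

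Your pointwise identity $|v_i-\tX_{ij}v_j|^2=(|v_i|-|v_j|)^2+|v_i|\,|v_j|\,|w_i-\tX_{ij}w_j|^2$ is correct (it is the standard starting point in \cite{Cheeger}), but the step you label ``the final and hardest step''---replacing the magnitude weights $|v_i|\,|v_j|$ by $1$ at a cost of only $C'/\tau$---is asserted rather than proved. The argument you gesture at does not close: the Poincar\'e inequality $\mathbf{r}^*(D-W)\mathbf{r}\ge\tau\,\mathbf{r}^*D\mathbf{r}$ controls $\bigl\||\mathbf{v}|-\text{mean}\bigr\|$ only in a $D$-weighted $\ell_2$ sense, which does not prevent individual $|v_i|$ from being arbitrarily small, and that is exactly the $1/\min_i|v_i|^2$ obstruction you yourself identify; ruling it out requires the random-threshold/level-set rounding analysis that constitutes essentially all of the proof of Lemma 3.6 in \cite{Cheeger}, none of which is carried out here. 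So as written the proposal proves the easy half and names, but does not supply, the hard half. The fix is either to cite Lemma 3.6 of \cite{Cheeger} explicitly (as the paper does), after which your argument coincides with the paper's, or to actually execute the level-set rounding argument in full.
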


\begin{proof}  
 One can see that 
\begin{align*}
\inf_{{\bf v} \in \C^d \setminus \{ \bf 0 \}} \bigfrac{{\bf v}^* L_1 {\bf v}}{{\bf v}^* {\bf v}} &= \inf_{{\bf y} \in \C^d \setminus \{ \bf 0 \}} \bigfrac{(D^{1/2}{\bf y})^* L_1 (D^{1/2} {\bf y})}{(D^{1/2}{\bf y})^*(D^{1/2} {\bf y})} = \inf_{{\bf y} \in \C^d \setminus \{ \bf 0 \} } \bigfrac{{\bf y}^*(D - (\tX \circ W)){\bf y}}{{\bf y}^* D {\bf y} } \\
&= \inf_{y \in \C^d \setminus \{ \bf 0 \}} \eta_{\tX}({\bf y}) \le \min_{{\bf y} \in \mathbbm{C}^d} \eta_{\tX}( \sgn({\bf y}) ).  
\end{align*}
From here, Lemma 3.6 in \cite{Cheeger} gives \[\eta_{\tX}({\bf w}) \le \bigfrac{44}{\tau} \eta_{\tX}\left( D^{-1/2}{\bf u} \right) = \bigfrac{44}{\tau} \cdot \inf_{{\bf v} \in \C^d \setminus \{ \bf 0 \}} \bigfrac{{\bf v}^* L_1 {\bf v}}{{\bf v}^* {\bf v}} \le \bigfrac{44}{\tau} \cdot \min_{{\bf y} \in \mathbbm{C}^d} \eta_{\tX}( \sgn({\bf y}) ).\]
\end{proof}

We now state the main result of this section:

\begin{thm}
Suppose that $G = (V=[d], E)$ is an undirected, connected, and unweighted graph (so that $W_{ij} = \chi_{E(i, j)}$) with spectral gap $\tau > 0$.  Let ${\bf u} \in \mathbbm{C}^d$ be an eigenvector of $L_1$ from \eqref{equ:ConnectLaplace} corresponding to its smallest eigenvalue, and let \[\widetilde{\bf x} = \sgn({\bf u}) \ \text{and} \ \tx_0 = \sgn(\x_0).\] Then for some universal constant $C \in \mathbbm{R}^+$, \[\min_{\theta \in [0, 2\pi]} ||\tx - \ee^{\ii \theta} \tx_0||_2 \le C \bigfrac{\|\tX - \widetilde{\X}_0\|_F}{\tau \cdot \sqrt{\min_{i \in V}(\deg(i))}},\] where $\tX$ and $\widetilde{\X}_0$ are defined as per \eqref{equ:DeftildeXviaGcomp} and \eqref{equ:DeftildeXviaGpure}, respectively. 
\label{thm:SpecGraphPertBound}
\end{thm}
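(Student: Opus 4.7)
The plan is to follow the strategy of Theorem 6.3 of \cite{alexeev2014phase}: bound the frustration of the \emph{true} phase vector $\tx_0$ with respect to the observed $\tX$, invoke the Cheeger-type inequality from Lemma~\ref{lem:CheegerInequality} to transfer this bound to the computed $\tx$, and finally convert a small frustration back into a small $\ell^2$ error by exploiting the spectral gap $\tau$.

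First I would evaluate $\eta_{\tX}(\tx_0)$ explicitly. On any edge $(i,j)\in E$, equation \eqref{equ:DeftildeXviaGpure} gives $(\widetilde{X}_0)_{ij}(\tilde{x}_0)_j = (\tilde{x}_0)_i|(\tilde{x}_0)_j|^2 = (\tilde{x}_0)_i$, so $(\tilde{x}_0)_i - \tX_{ij}(\tilde{x}_0)_j = \bigl((\widetilde{X}_0)_{ij}-\tX_{ij}\bigr)(\tilde{x}_0)_j$. Since $|(\tilde{x}_0)_j|=1$ and both $\tX$, $\widetilde{X}_0$ are supported on $E$, substituting into \eqref{eq:frustration} collapses the numerator to $\tfrac{1}{2}\|\tX-\widetilde{X}_0\|_F^2$, giving $\eta_{\tX}(\tx_0) = \|\tX-\widetilde{X}_0\|_F^2/(2\vol(G))$. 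Plugging this into Lemma~\ref{lem:CheegerInequality} (with $\mathbf{y}=\tx_0$ as a feasible point in the minimum) yields $\eta_{\tX}(\tx)\lesssim \|\tX-\widetilde{X}_0\|_F^2/(\tau\,\vol(G))$.

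Next I would swap $\tX$ for $\widetilde{X}_0$ inside the frustration of $\tx$. The pointwise triangle inequality
\[
|\tilde{x}_i-(\widetilde{X}_0)_{ij}\tilde{x}_j|^2\le 2|\tilde{x}_i-\tX_{ij}\tilde{x}_j|^2+2|\tX_{ij}-(\widetilde{X}_0)_{ij}|^2|\tilde{x}_j|^2,
\]
combined with $|\tilde{x}_j|=1$ and the support of $\tX-\widetilde{X}_0$, gives $\eta_{\widetilde{X}_0}(\tx)\le 2\eta_{\tX}(\tx)+\|\tX-\widetilde{X}_0\|_F^2/\vol(G)$, so the bound $\eta_{\widetilde{X}_0}(\tx)\lesssim \|\tX-\widetilde{X}_0\|_F^2/(\tau\,\vol(G))$ persists (for $\tau\le 1$, as will be the case of interest).

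The main obstacle is the final step: converting this small frustration into a small $\min_\theta\|\tx-\ee^{\ii\theta}\tx_0\|_2$. Setting $z_i := \overline{(\tilde{x}_0)_i}\,\tilde{x}_i$ (so $|z_i|=1$) and using once more that $(\widetilde{X}_0)_{ij}=(\tilde{x}_0)_i\overline{(\tilde{x}_0)_j}$ on edges, a direct expansion identifies the numerator of $\eta_{\widetilde{X}_0}(\tx)$ with $\tfrac{1}{2}\sum w_{ij}|z_i-z_j|^2 = \mathbf{z}^*(D-W)\mathbf{z}$. Decomposing $D^{1/2}\mathbf{z}=\alpha D^{1/2}\mathbbm{1}+\mathbf{w}$ with $\mathbf{w}\perp D^{1/2}\mathbbm{1}$, the spectral gap of $L$ yields $\mathbf{z}^*(D-W)\mathbf{z}=\mathbf{w}^*L\mathbf{w}\ge \tau\|\mathbf{w}\|_2^2\ge \tau\min_i\deg(i)\,\|\mathbf{z}-\alpha\mathbbm{1}\|_2^2$, and hence
\[
\min_{\beta\in\C}\|\mathbf{z}-\beta\mathbbm{1}\|_2^2 \;\le\; \frac{\eta_{\widetilde{X}_0}(\tx)\,\vol(G)}{\tau\,\min_i\deg(i)}.
\]
Finally, because $|z_i|=1$ implies $\|\mathbf{z}\|_2^2=d$, one computes $\min_{\theta}\|\mathbf{z}-\ee^{\ii\theta}\mathbbm{1}\|_2^2 = 2(d-|\mathbbm{1}^*\mathbf{z}|)$ and $\min_\beta\|\mathbf{z}-\beta\mathbbm{1}\|_2^2 = d-|\mathbbm{1}^*\mathbf{z}|^2/d$; a short algebraic comparison shows the former is at most twice the latter. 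Since $|(\tilde{x}_0)_i|=1$ gives $\|\tx-\ee^{\ii\theta}\tx_0\|_2=\|\mathbf{z}-\ee^{\ii\theta}\mathbbm{1}\|_2$, chaining the bounds above produces the stated inequality.

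I expect the technical heart of the argument to be this last conversion step: the spectral gap $\tau$ is an eigenvalue of the normalized Laplacian $L$, not of $D-W$ itself, which forces the $\sqrt{\min_i\deg(i)}$ in the denominator through the $D^{1/2}$ weights, and the passage from ``best scalar approximation $\beta\mathbbm{1}$'' to ``best unimodular approximation $\ee^{\ii\theta}\mathbbm{1}$'' genuinely uses the $|z_i|=1$ constraint. The remaining steps are essentially telescoping triangle inequalities combined with the direct calculation of $\eta_{\tX}(\tx_0)$.
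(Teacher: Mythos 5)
Your proposal is correct and follows essentially the same route as the paper's proof: evaluate $\eta_{\tX}(\tx_0)=\|\tX-\widetilde{\X}_0\|_F^2/(2\vol(G))$, invoke Lemma~\ref{lem:CheegerInequality}, and convert the resulting small frustration of $\tx$ into an $\ell^2$ bound via the spectral gap of $L$ applied to $g_i=(\tx_0)_i^*\tx_i$ (your $z_i$) after projecting out the $D^{1/2}\mathbbm{1}$ direction. The only differences are bookkeeping: the paper transfers the error through $\Lambda_{ij}=(\tX_0)_{ij}^*\tX_{ij}$ with the inequality $\tfrac12 a^2-b^2\le(a-b)^2$ and aligns phases via $|g_i-\sgn(\alpha)|\le 2|g_i-\alpha|$, whereas you pass to $\eta_{\widetilde{\X}_0}(\tx)$ by a triangle inequality and compare $\min_\beta\|\mathbf{z}-\beta\mathbbm{1}\|_2$ with $\min_\theta\|\mathbf{z}-\ee^{\ii\theta}\mathbbm{1}\|_2$ directly -- equivalent steps in substance.
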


The proof follows by combining the two following lemmas, which share the hypotheses of the theorem.  Additionally, we introduce the notation ${\bf g} \in \C^d$ and $\Lambda \in \C^{d \times d}$, where $$g_i = (\tx_0)^*_i \tx_i \quad \text{and} \quad \Lambda_{ij} = (\tX_0)^*_{ij} \tX_{ij},$$ and observe that $|g_i| = |\Lambda_{ij}| = 1$ for each $(i, j) \in E$.

\begin{lem}
Under the hypotheses of Theorem \ref{thm:SpecGraphPertBound}, there exists an angle $\theta \in [0, 2\pi]$ such that \[\tau \sum_{i \in V} \deg(i) \lvert g_i - \ee^{\ii \theta} \rvert^2 \le 2 \sum_{(i, j) \in E} \lvert g_i - g_j \rvert^2.\]
\label{lem:gbound1}
\end{lem}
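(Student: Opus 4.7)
The plan is to interpret the right-hand side as the Dirichlet form of the (unweighted) Laplacian $L$ of $G$ and to use the spectral gap $\tau$ to control how far $\mathbf{g} = (g_i)_{i \in V}$ can be from a constant function in the degree-weighted $\ell^2$ norm. Since the graph is unweighted, the symmetrization identity already used in Section~\ref{sec:Perturb} gives
\[
\mathbf{g}^*(D - W)\mathbf{g} \;=\; \tfrac{1}{2}\sum_{i,j \in V} w_{ij}\,|g_i - g_j|^2 \;\le\; \sum_{(i,j) \in E} |g_i - g_j|^2,
\]
so it suffices to bound $\mathbf{g}^*(D-W)\mathbf{g} = (D^{1/2}\mathbf{g})^* L (D^{1/2}\mathbf{g})$ from below by a constant multiple of the degree-weighted distance from $\mathbf{g}$ to a unit-modulus scalar. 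I would then work with the quadratic form on the right using the spectral resolution of $L$.

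Because $G$ is connected, $\ker L = \Span(D^{1/2}\mathbbm{1})$ is one-dimensional and every other eigenvalue of $L$ is at least $\tau$. Writing
\[
D^{1/2}\mathbf{g} = \beta\, D^{1/2}\mathbbm{1} + \mathbf{v}^\perp, \qquad \mathbf{v}^\perp \perp D^{1/2}\mathbbm{1},
\]
the orthogonal-projection formula identifies $\beta = \vol(G)^{-1}\sum_i \deg(i)\,g_i$ as the degree-weighted mean of $\mathbf{g}$, which satisfies $|\beta| \le 1$ by the triangle inequality since each $|g_i| = 1$. The Rayleigh--Ritz inequality then gives
\[
\tau\,\|\mathbf{v}^\perp\|_2^2 \;\le\; (D^{1/2}\mathbf{g})^* L (D^{1/2}\mathbf{g}),
\]
and a short Pythagoras computation using $|g_i| = 1$ identifies $\|\mathbf{v}^\perp\|_2^2 = \vol(G)\,(1 - |\beta|^2)$.

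Finally, I would take $\ee^{\ii\theta} = \beta/|\beta|$ (or any unit scalar if $\beta = 0$), which aligns the mean phase of $\mathbf{g}$ with the target. Expanding $|g_i - \ee^{\ii\theta}|^2 = 2 - 2\operatorname{Re}(\ee^{-\ii\theta} g_i)$ and summing against $\deg(i)$ produces the clean identity
\[
\sum_{i \in V} \deg(i)\,|g_i - \ee^{\ii\theta}|^2 \;=\; 2\,\vol(G)\,(1 - |\beta|).
\]
The one subtlety---and what I expect to be the main obstacle---is that the spectral gap naturally controls $\|\mathbf{v}^\perp\|_2^2 = \vol(G)(1 - |\beta|^2)$, whereas the quantity appearing in the lemma is proportional to $1 - |\beta|$. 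This is resolved by the elementary inequality $1 - |\beta| \le 1 - |\beta|^2$, valid precisely because $|\beta| \le 1$. Chaining the estimates yields $\tau \sum_i \deg(i)\,|g_i - \ee^{\ii\theta}|^2 \le 2\sum_{(i,j) \in E}|g_i - g_j|^2$, as claimed.
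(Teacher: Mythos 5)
Your proof is correct and takes essentially the same route as the paper's: both identify the degree-weighted mean $\beta$ (the paper's $\alpha$), apply the spectral gap via the Rayleigh quotient to the component of $D^{1/2}{\bf g}$ orthogonal to $D^{1/2}\mathbbm{1}$, and choose $\theta = \arg\beta$. The only difference is the final step, where the paper uses the pointwise bound $|g_i - \sgn(\alpha)| \le 2|g_i - \alpha|$ while you compute both quantities exactly in terms of $|\beta|$ and invoke $1-|\beta| \le 1-|\beta|^2$; this is valid and, if anything, gives a marginally sharper constant.
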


\begin{lem}
Under the hypotheses of Theorem \ref{thm:SpecGraphPertBound}, there exists an absolute constant $C$ such that \[2\sum_{(i, j) \in E} \lvert g_i - g_j \rvert^2 \le \dfrac{C}{\tau} \lVert \tX - \tX_0 \rVert^2_F.\]
\label{lem:gbound2}
\end{lem}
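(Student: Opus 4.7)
The plan is to reduce the left-hand sum to a frustration that can be controlled by the Cheeger inequality of Lemma \ref{lem:CheegerInequality} applied with $\tx_0$ as the test vector. The key preliminary identity is that, on each edge $(i,j) \in E$, one has $(\widetilde{X}_0)_{ij} = (\tx_0)_i \overline{(\tx_0)_j}$, so multiplying $g_i - g_j$ by the unimodular scalar $(\tx_0)_i$ yields
\[|g_i - g_j| = |(\tx_0)_i g_i - (\tx_0)_i g_j| = |\tx_i - (\widetilde{X}_0)_{ij} \tx_j|.\]
This rewrites the left-hand side in a form that closely resembles the numerator of the frustration $\eta_{\tX}(\tx)$, but with $\widetilde{X}_0$ in place of $\tX$.

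Next, I would insert and subtract $\tX_{ij}\tx_j$ inside the modulus, apply the triangle inequality, and square to obtain $|g_i - g_j|^2 \le 2 |\tx_i - \tX_{ij} \tx_j|^2 + 2 |\tX_{ij} - (\widetilde{X}_0)_{ij}|^2$. Summing over $(i,j) \in E$ and noting that $\tX - \widetilde{X}_0$ is supported precisely on the edge set (with matching unit diagonals contributing zero), I get
\[\sum_{(i,j) \in E} |g_i - g_j|^2 \;\le\; 2 \sum_{(i,j) \in E} |\tx_i - \tX_{ij}\tx_j|^2 \;+\; 2 \|\tX - \widetilde{X}_0\|_F^2 \;=\; 4 \vol(G)\,\eta_{\tX}(\tx) + 2 \|\tX - \widetilde{X}_0\|_F^2,\]
where the last equality uses $|\tx_i| = 1$ and the definition of the frustration in \eqref{eq:frustration}.

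For the frustration term, since $\tx = \sgn(u)$ with $u$ a bottom eigenvector of $L_1$, Lemma \ref{lem:CheegerInequality} applied with comparison vector $\tx_0$ (for which $\sgn(\tx_0) = \tx_0$) gives $\eta_{\tX}(\tx) \le (C'/\tau) \, \eta_{\tX}(\tx_0)$. Now I would apply the same identity as in the first paragraph, but to $\tx_0$: for each $(i,j) \in E$, $|(\tx_0)_i - \tX_{ij}(\tx_0)_j| = |(\widetilde{X}_0)_{ij} - \tX_{ij}|$ (since $(\widetilde{X}_0)_{ij}(\tx_0)_j = (\tx_0)_i$), so that
\[\eta_{\tX}(\tx_0) \;=\; \frac{\sum_{(i,j) \in E} |\tX_{ij} - (\widetilde{X}_0)_{ij}|^2}{2 \vol(G)} \;=\; \frac{\|\tX - \widetilde{X}_0\|_F^2}{2 \vol(G)}.\]
Plugging this into the Cheeger bound and then into the earlier display yields
\[2\sum_{(i,j) \in E} |g_i - g_j|^2 \;\le\; \frac{4 C'}{\tau} \|\tX - \widetilde{X}_0\|_F^2 + 4 \|\tX - \widetilde{X}_0\|_F^2 \;\le\; \frac{C}{\tau} \|\tX - \widetilde{X}_0\|_F^2,\]
the last step using that $\tau$ is bounded above by the largest eigenvalue of the normalized Laplacian (at most $2$). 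The only real bookkeeping hurdle is keeping straight the factor-of-two conventions between the frustration's $1/2$, the ordered-versus-unordered edge sums, and the symmetric Frobenius sum — but since the conclusion only requires an absolute constant, these drop into $C$ without difficulty.
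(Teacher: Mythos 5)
Your proposal is correct and follows essentially the same route as the paper: the same edge-wise decomposition of $|g_i - g_j|$ into the frustration term $|\tx_i - \tX_{ij}\tx_j|$ plus the perturbation term $|\tX_{ij} - (\tX_0)_{ij}|$ (you via the triangle inequality and $(a+b)^2 \le 2a^2 + 2b^2$, the paper via the reverse triangle inequality and $\tfrac12 a^2 - b^2 \le (a-b)^2$), followed by the same application of Lemma~\ref{lem:CheegerInequality} with comparison vector $\tx_0$, the same computation $\eta_{\tX}(\tx_0) = \|\tX - \tX_0\|_F^2 / (2\vol(G))$, and absorption of the leftover Frobenius term using that $\tau$ is bounded by an absolute constant. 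The differences are purely cosmetic bookkeeping, which, as you note, is swallowed by the constant $C$.
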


From these lemmas, the theorem follows immediately by observing $\sum_{i \in V} |g_i - \ee^{\ii \theta}|^2 = \lvert \tx - \ee^{\ii \theta} \tx_0 \rvert_2^2$.

\begin{proof}[Proof of Lemma~\ref{lem:gbound1}]
We set $\alpha = \bigfrac{\sum_{i \in V} \deg(i) g_i}{\vol(G)}$ and $w_i = g_i - \alpha$.  Then \[\mathbbm{1}^* D {\bf w} = \sum_{i \in V} \deg(i)(g_i - \alpha) = 0,\] so $D^{1/2}{\bf w}$ is orthogonal to $ D^{1/2}\mathbbm{1}$.  Noting that the null space of $L$ is spanned by $D^{1/2}\mathbbm{1}$ when $\tau > 0$, and recalling that $L \succeq 0$, we have \[\bigfrac{(D^{1/2}{\bf w})^* L (D^{1/2} {\bf w})}{{\bf w}^* D {\bf w}} \ge \min_{ \y^*D^{1/2}\mathbbm{1}=0} \bigfrac{{\bf y}^* L {\bf y}}{{\bf y}^* {\bf y}} = \tau.\]  Therefore, \[\everymath{\displaystyle} 
\begin{array}{rclcl}%
\tau {\bf w}^* D {\bf w} & \le&  {\bf w}^*(D - W) {\bf w} & = & {\bf g}^*(D - W) {\bf g}  \\
 & = & \sum_{i \in V} \deg(i) |g_i|^2 - \sum_{i \in V} g_i^* \sum_{(i,j) \in E}  g_j & = & \sum_{(i,j) \in E} (1 - g _i^* g_j) \\
 & = & \frac{1}{2} \sum_{(i, j) \in E} |g_i - g_j|^2.%
\end{array}\]%
We note that $\tau {\bf w}^* D {\bf w} = \tau \sum_{i \in V}\deg(i)|g_i - \alpha|^2$, while we seek a bound on $\sum_{i \in V} \deg(i) |g_i - \ee^{\ii \theta}|^2$.  To that end, we use the fact that $|g_i| = |\sgn(\alpha)| = 1$ to obtain $$|g_i - \sgn(\alpha)| \le |g_i - \alpha| + |\alpha - \sgn(\alpha)| \le 2|g_i - \alpha|.$$   Setting $\theta := \arg{\alpha}$, we have the stated result.
\end{proof}

\begin{proof}[Proof of Lemma~\ref{lem:gbound2}]
Observe that for any two real numbers $a, b \in \R$, we have $\frac{1}{2} a^2 - b^2 \le (a - b)^2$. Thus, by the reverse triangle inequality we have 
\[\everymath{\displaystyle}\begin{array}{rcl}%
\sum_{(i, j) \in E}  \left(\frac{1}{2} |g_i - g_j|^2 - |\Lambda_{ij} - 1|^2\right) & \le & \sum_{(i, j) \in E}  \left( |g_i - g_j| - |\Lambda_{ij} - 1|\right)^2 \\%
 & \le & \sum_{(i, j) \in E}  |g_i - \Lambda_{ij}g_j|^2 \\%
& = & \sum_{(i, j) \in E}  |\tx_i - \tX_{ij} \tx_j |^2 \\%
& = & 2 \vol(G) \cdot \eta_{\tX}(\tx),%
\end{array}\]%
as the denominator of \eqref{eq:frustration} is $2\vol(G)$ whenever the entries of ${\bf y}$ all have unit modulus.

Lemma~\ref{lem:CheegerInequality} now tells us that 
\[ \sum_{(i, j) \in E}  \left(\frac{1}{2} |g_i - g_j|^2 - |\Lambda_{ij} - 1|^2\right) \le \bigfrac{2 C' \vol(G)}{\tau} \min_{{\bf y} \in \mathbbm{C}^d} \eta_{\tX}( \sgn({\bf y}) ) \le \bigfrac{2C' \vol(G)}{\tau} \eta_{\tX}(\tx_0).\] 
Moreover,%
$$\begin{array}{rcl}%
\eta_{\tX}(\tx_0) & = & \bigfrac{\sum_{(i, j) \in E} |(\tx_0)_i - \tX_{ij} (\tx_0)_j |^2}{2 \sum_{i \in V} \deg(i) |(\tx_0)_i|^2} \\%
& = & \bigfrac{\sum_{(i, j) \in E} |(\tx_0)_i (\tx_0)^*_j - \tX_{ij}|^2}{2 \vol(G)} \\%
& = & \bigfrac{ \| \widetilde{\X}_0 - \tX \|_F^2}{2 \vol(G)},%
\end{array}$$%
so that $\sum_{(i, j) \in E} \frac{1}{2} |g_i - g_j|^2 \le \frac{C'}{\tau} \lVert \X_0 - \X \rVert^2_F + \sum_{(i, j) \in E} |\Lambda_{ij} - 1|^2$.  Considering also that $$\sum_{(i, j) \in E} |\Lambda_{ij} - 1|^2 = \sum_{(i, j) \in E} |\tX_{ij} - (\tX_0)_{ij}|^2 = \| \tX - \tX_0 \|_F^2$$ and $\tau \le 1$, this completes the proof.
\end{proof}

We may now use Theorem~\ref{thm:SpecGraphPertBound} to produce a perturbation bound for our banded matrix of phase differences $\widetilde{\X}_0$.

\begin{cor}
Let $\tX_0$ be the matrix in \eqref{eq:X_0}, $\tx_0$ be the vector of true phases \eqref{eq:tx_0}, and $\tX$ be as in line 3 of Algorithm~\ref{alg:phaseRetrieval1} with $\tx = \sgn({\bf u})$ where ${\bf u}$ is the top eigenvector of $\widetilde{X}$. Suppose that 
$\Vert \tX_0 - \tX \Vert_F \le \eta \Vert \tX_0 \Vert_F$ for some $\eta>0$.  Then, there exists an absolute constant $C' \in \mathbb{R}^+$ such that
\[\min_{\theta\in[0,2\pi]} \Vert \tx_0 - \mathbbm{e}^{\mathbbm{i}\theta} \tx \Vert_2 \le C' \bigfrac{\eta d^\frac{5}{2}}{\delta^{2}}.\]
\label{cor:GenBoundv2}
\end{cor}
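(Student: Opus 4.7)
The plan is to directly apply Theorem~\ref{thm:SpecGraphPertBound} with $G$ taken to be the unweighted, connected graph whose adjacency matrix is $\widetilde{U}$ from \eqref{equ:NormedUmatSPG}, and then to substitute the three quantities appearing on the right-hand side of that theorem --- namely $\tau$, $\min_{i \in V} \deg(i)$, and $\|\tX - \widetilde{\X}_0\|_F$ --- with explicit expressions or bounds in terms of $\delta$ and $d$.

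First I would note that the relevant graph $G$ is $(2\delta-1)$-regular: every vertex $i \in V = [d]$ is connected exactly to the $2(\delta-1)$ indices $j$ with $0 < |i-j| \bmod d < \delta$, so $\deg(i) = 2\delta-1$ for every $i$, and in particular $\min_{i\in V} \deg(i) = 2\delta - 1$. Second, Corollary~\ref{cor:Gspectrum} provides the spectral gap bound $\tau \ge C''' \delta^2/d^2$ under the mild hypothesis $d \ge 4\delta$ and $\delta \ge 3$ (which we may assume; the remaining cases can be absorbed into the constant $C'$). Third, since $\widetilde{\X}_0$ has unit-modulus entries precisely on the support of $W$ and zero entries off of it, $\|\widetilde{\X}_0\|_F^2$ equals the number of nonzero entries of $\widetilde{\X}_0$, which is $d(2\delta-1)$.

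Combining these facts with Theorem~\ref{thm:SpecGraphPertBound} and the hypothesis $\|\widetilde{\X}_0 - \widetilde{\X}\|_F \le \eta \|\widetilde{\X}_0\|_F$ gives
\[
\min_{\theta \in [0,2\pi]} \|\tx_0 - \ee^{\ii\theta}\tx\|_2 \;\le\; C \cdot \frac{\eta \sqrt{d(2\delta-1)}}{(C'''\delta^2/d^2)\sqrt{2\delta-1}} \;=\; \frac{C}{C'''} \cdot \frac{\eta \, d^{5/2}}{\delta^2},
\]
which is the desired bound after absorbing $C/C'''$ into the constant $C'$.

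The whole argument is bookkeeping once the right invariants of $G$ are identified, so there is no real obstacle --- the only subtlety worth being careful about is that Theorem~\ref{thm:SpecGraphPertBound} produces $\widetilde{\x} = \sgn({\bf u})$ where ${\bf u}$ is the eigenvector of $L_1$ associated with its \emph{smallest} eigenvalue, whereas Algorithm~\ref{alg:phaseRetrieval1} returns the normalization of the \emph{top} eigenvector of $\widetilde{\X}$. These two prescriptions agree because $L_1 = I - (2\delta-1)^{-1}(\widetilde{\X}\circ W) = I - (2\delta-1)^{-1}\widetilde{\X}$ on the support of interest, so the smallest eigenvector of $L_1$ and the largest eigenvector of $\widetilde{\X}$ coincide. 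This observation should be flagged briefly before the final substitution.
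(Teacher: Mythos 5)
Your proposal is correct and follows essentially the same route as the paper's own proof: apply Theorem~\ref{thm:SpecGraphPertBound} to the $(2\delta-1)$-regular banded graph, use Corollary~\ref{cor:Gspectrum} for $\tau$, compute $\|\widetilde{\X}_0\|_F = \sqrt{d(2\delta-1)}$, and note that $L_1 = I - \frac{1}{2\delta-1}\widetilde{\X}$ so the top eigenvector of $\widetilde{\X}$ is the bottom eigenvector of $L_1$. The only cosmetic point is your degree count: the stated $\deg(i)=2\delta-1$ tacitly includes the diagonal (self-loop) entries of $\widetilde{U}$, which is exactly the paper's convention $D=(2\delta-1)I$, so nothing is amiss.
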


\begin{proof}
 We apply Theorem~\ref{thm:SpecGraphPertBound} with the unweighted and undirected graph $G = (V, E)$, where $V = [d]$ and $E = \{(i, j) : |i - j| \mod d < \delta\}$.  Observe that $G$ is also connected and $(2\delta - 1)$-regular so that $\min_{i \in V}(\deg(i)) = 2\delta - 1$.  The spectral gap of $G$ is $\tau > C'''\delta^2 / d^2 > 0$ by Corollary~\ref{cor:Gspectrum}.  We know that $\| \widetilde{X}_0 \|_F = \sqrt{d(2\delta - 1)}$, so that $\| \tX_0 - \tX \|_{F} \le C'' \eta (d \delta)^{1/2}$.  Finally, if ${\bf u}$ is the top eigenvector of $\widetilde{X}$ then it will also be an eigenvector of $L_1$ corresponding to its smallest eigenvalue since, here, $L_1 = I - \frac{1}{2 \delta - 1} \tX $.

Combining these observations we have \[\min_{\theta\in[0,2\pi]} \Vert \tx_0 - \mathbbm{e}^{\mathbbm{i}\theta} \tx \Vert_2 \le C \bigfrac{C'' \eta (d \delta)^{1/2}}{C''' \delta^2 / d^2 \cdot (2 \delta - 1)^{1/2}} = C' \bigfrac{\eta d^{5/2}}{\delta^2}.\]
\end{proof}

We are now properly equipped to analyze the robustness of Algorithm~\ref{alg:phaseRetrieval1} to noise.

\section{Recovery Guarantees for the Proposed Method}
\label{sec:RecovGuarantee}

Herein we will assume Algorithm~\ref{alg:phaseRetrieval1} is provided with measurements $\y$ of the form \eqref{eq:shift_model_noise} such that the linear operator \eqref{eq:linear} is invertible on $T_{\delta}(\C^{d \times d})$ with condition number $\kappa > 0$.  Unless otherwise stated, we follow the notation of \S \ref{sec:locCorrMeas}- \S\ref{sec:mainRes}; therefore, our assumptions imply that $\lVert \X - \X_0 \rVert_F \le \kappa \lVert \n \rVert_2$.

We now aim to bound the Frobenius norm of the perturbation error $(\tilde{X} - \tilde{\X}_0)$ present in the matrix $\tilde{X}$ formed in line 2 of Algorithm~\ref{alg:phaseRetrieval1}.
Toward this end we define the set of $\rho$-small indexes of $\x_0$ to be 
\begin{equation}
S_\rho := \left\{ j~\bigg|~ | \left( x_0 \right)_j | < \left(\frac{\delta \| \n \|_2}{\rho} \right)^{\frac{1}{4}} \right\}
\label{equ:SrhoDef}
\end{equation}
where $\rho \in \mathbbm{R}^+$ is a free parameter.  
%
%
With the definition of $S_\rho$ in hand we can bound the perturbation error $(\tilde{X} - \tilde{\X}_0)$ using the next lemma.

\begin{lem}
Let $\tilde{\X}$ be the matrix computed in line 2 of Algorithm~\ref{alg:phaseRetrieval1}.  We have that $$\| \tilde{\X} - \tilde{\X}_0 \|_{\rm F} \leq C \sqrt{\frac{\rho \frac{\kappa}{\delta} \| \n \|_2 + \left| S_\rho \right|}{d}} \cdot \| \tilde{\X}_0 \|_{\rm F}$$
holds for all $\rho \in \R^+$, where $C$ is an absolute constant.
\label{lem:EtaBound}
\end{lem}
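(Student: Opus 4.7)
The plan is to split the band support $E := \{(i,j) \in [d] \times [d] : |i - j| \bmod d < \delta\}$ into good edges, where both coordinates lie outside the small-entry set $S_\rho$, and bad edges, where at least one coordinate lies in $S_\rho$, and then bound the contributions to $\|\tilde{X} - \tilde{X}_0\|_F^2 = \sum_{(i,j) \in E} |\tilde{X}_{ij} - (\tilde{X}_0)_{ij}|^2$ from each piece separately.

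The key elementary ingredient is the pointwise bound $|\sgn(a) - \sgn(b)| \leq 2|a - b|/|b|$ for nonzero $a, b \in \C$, which follows from the decomposition $a|b| - b|a| = a(|b| - |a|) + (a - b)|a|$ and the reverse triangle inequality. For $(i,j) \in E$ with both $i, j \notin S_\rho$, the definition of $S_\rho$ in \eqref{equ:SrhoDef} gives $|(X_0)_{ij}| = |(x_0)_i| \cdot |(x_0)_j| \geq \sqrt{\delta \|\n\|_2 / \rho}$, so applying the bound with $a = X_{ij}$ and $b = (X_0)_{ij}$ yields
$$|\tilde{X}_{ij} - (\tilde{X}_0)_{ij}|^2 \leq \frac{4\rho}{\delta \|\n\|_2} |X_{ij} - (X_0)_{ij}|^2.$$
Summing over good edges and enlarging the sum to all of $E$ produces a good-edge contribution of at most $\tfrac{4\rho}{\delta \|\n\|_2} \|X - X_0\|_F^2$, which by the conditioning hypothesis $\|X - X_0\|_F \leq \kappa \|\n\|_2$ scales as $\rho \kappa \|\n\|_2 / \delta$ up to absolute constants absorbed into $C$.

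For bad edges I use the trivial bound $|\tilde{X}_{ij} - (\tilde{X}_0)_{ij}|^2 \leq 4$, which also absorbs the degenerate cases in which $X_{ij} = 0$ or $(X_0)_{ij} = 0$ under the convention $\sgn(0) = 1$. Each index in $S_\rho$ sits in at most $2\delta - 1$ rows and $2\delta - 1$ columns of the band, so the number of bad edges is at most $2(2\delta - 1)|S_\rho|$, contributing a term of order $\delta |S_\rho|$.

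Combining the two pieces gives $\|\tilde{X} - \tilde{X}_0\|_F^2 \lesssim \rho \kappa \|\n\|_2 + \delta |S_\rho|$, and the claimed bound follows after factoring out $\|\tilde{X}_0\|_F^2 = d(2\delta - 1)$. The main subtlety lies in the decoupling step: the multiplicative factorization $|(X_0)_{ij}| = |(x_0)_i| \cdot |(x_0)_j|$ is precisely what lets a single-index threshold (membership in $S_\rho$) enforce a product lower bound on edge magnitudes, and the free parameter $\rho$ tunes the tradeoff between the two error terms, since shrinking $\rho$ reduces the good-edge error but enlarges $S_\rho$ and hence the bad-edge count.
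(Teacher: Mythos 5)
Your argument follows the paper's proof essentially line for line: split the band into entries indexed by $S_\rho^c \times S_\rho^c$ and the rest, bound the former via the pointwise estimate $|\sgn(a)-\sgn(b)| \le 2|a-b|/|b|$ (the paper derives the identical bound $2|N_{jk}|/|(X_0)_{jk}|$ by writing the entrywise difference as $2\sin(|\phi_{jk}-\beta_{jk}|/2)$), bound the latter by $4$ times an $O(\delta|S_\rho|)$ count of bad entries, and normalize by $\|\tX_0\|_F^2 = (2\delta-1)d$.

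The one substantive issue is the power of $\kappa$ in the good-entry term. With the threshold as printed in \eqref{equ:SrhoDef}, $j,k \notin S_\rho$ gives $|(X_0)_{jk}| \ge (\delta\|\n\|_2/\rho)^{1/2}$, so your good-entry contribution is $\frac{4\rho}{\delta\|\n\|_2}\|X-X_0\|_F^2 \le \frac{4\rho\kappa^2\|\n\|_2}{\delta}$: it scales as $\rho\kappa^2\|\n\|_2/\delta$, not $\rho\kappa\|\n\|_2/\delta$ as you assert, because the hypothesis bounds $\|X-X_0\|_F$, not its square, by $\kappa\|\n\|_2$. Hence your concluding display $\|\tX-\tX_0\|_F^2 \lesssim \rho\kappa\|\n\|_2 + \delta|S_\rho|$ (which is what the stated bound amounts to after multiplying through by $2\delta-1$) only follows if $\kappa \lesssim \delta$, which is not assumed; for the Example 1 masks $\kappa$ can be of order $\delta^2$. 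To be fair, the paper's own displayed chain commits exactly this $\delta$-versus-$\kappa$ swap (the denominators $\kappa\|\n\|_2$ and $\delta\|\n\|_2$ are interchanged mid-computation), and everything, your proof included, becomes correct verbatim if the threshold in \eqref{equ:SrhoDef} is read as $(\kappa\|\n\|_2/\rho)^{1/4}$: then the good entries contribute $4\rho\|X-X_0\|_F^2/(\kappa\|\n\|_2) \le 4\rho\kappa\|\n\|_2$, and both the lemma and the downstream choice $\rho = \kappa\delta\|\n\|_2/(x_0)_{\rm min}^4$ leading to Corollary~\ref{Cor:RecovRes} come out as stated. So: same route as the paper, but you should either carry the $\kappa^2/\delta$ factor honestly or adopt the $\kappa$-based threshold; as written, the step that silently drops a factor of $\kappa$ is a real (if inherited) gap.
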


\begin{proof}
For any $j,k$ with 
$| j - k |~{\rm mod}~d  < \delta$ we have that 
$$|(\tilde{\X}_0)_{jk} - \tilde{\X}_{jk}| = |\mathbbm{e}^{\mathbbm{i}(\phi_{jk} - \beta_{jk})} - 1| = 2 \sin \left(  \frac{| \phi_{jk} - \beta_{jk} |}{2} \right)$$
where $\phi_{jk} = \arg (\tilde{\X}_0)_{jk}$ and $\beta_{jk} = \arg (\tilde{\X}_{j,k})$.  Defining $N_{jk} = \X_{jk} - (\X_0)_{jk}$, the law of sines now implies that $$2 \sin \left(  \frac{| \phi_{jk} - \beta_{jk} |}{2} \right) \leq 2 \left| \sin \left(  \frac{\phi_{jk} - \beta_{jk} }{2} \right) \right| \leq 2 \frac{|N_{jk}|}{| (\X_0)_{jk} |} \leq 2 \rho^{\frac{1}{2}} \frac{|N_{jk}|}{(\kappa \| \n \|_2)^{\frac{1}{2}}}$$ whenever $j,k \in S_\rho^{c}$.  Thus, there exists an absolute constant $C' \in \R^+$ such that%
\begin{align*}
\| \tilde{\X} - \tilde{\X}_0 \|^2_{\rm F} &\leq \sum_{j,k \in S_\rho^{c}} 4 \rho \frac{|N_{jk}|^2}{\kappa \| \n \|_2} + \sum_{j \in S_\rho, ~{\rm or}~ k \in S_\rho} |(\tilde{\X}_0)_{jk} - \tilde{\X}_{jk}|^2\\ &\leq 4 \rho \frac{\| N \|^2_{\rm F}}{\delta \| \n \|_2} + \sum_{j \in S_\rho} 4 \cdot (4\delta - 3) = 4 \rho \frac{\| N \|^2_{\rm F}}{\kappa \| \n \|_2} + 4 \cdot (4\delta - 3) \left| S_\rho \right| \\ & \leq C' (\rho \kappa \| \n \|_2 + \delta \left| S_\rho \right|).
\end{align*}%
The proof is completed by recalling that $\lVert \tX_0 \rVert_F = \sqrt{(2\delta - 1)d}$.
\end{proof}

We are finally ready to prove a robustness result for Algorithm~\ref{alg:phaseRetrieval1}.

\begin{thm}
Suppose that $\tilde{\X}$ and $\tilde{\X}_0$ satisfy $\Vert \tilde{\X} - \tilde{\X}_0 \Vert_{\rm F}~\leq \eta \Vert \tilde{\X}_0 \Vert_F$ for some $\eta>0$.
Then, the estimate $\x$ produced by Algorithm~\ref{alg:phaseRetrieval1} satisfies 
\[ \min_{\theta \in [0, 2 \pi]} \left\Vert  \x_0 - \mathbbm{e}^{\mathbbm{i} \theta} \x \right\Vert_2 \leq C \Vert \x_0 
        \Vert_{\infty} \left( \frac{d^{5/2}}{\delta^2} \right) \eta  + C  d^{\frac{1}{4}} \sqrt{\kappa \| \n \|_2 },\]
where $C \in \mathbb{R}^+$ is an absolute universal constant.  Alternatively, one can bound the error in terms of the size of the index set $S_\rho$ from \eqref{equ:SrhoDef} as 
\begin{equation}
\min_{\theta \in [0, 2 \pi]} \left\Vert  \x_0 - \mathbbm{e}^{\mathbbm{i} \theta} \x \right\Vert_2 \leq C' \Vert \x_0 
        \Vert_{\infty} \left( \frac{d}{\delta} \right)^2 \sqrt{\rho \frac{\kappa}{\delta} \| \n \|_2 + \left| S_\rho \right|} + C' d^{\frac{1}{4}} \sqrt{\kappa \| \n \|_2 }, \label{equ:ManRes2} 
\end{equation}
for any desired $\rho \in \R^+$, where $C' \in \mathbb{R}^+$ is another absolute universal constant.
\label{thm:MainRes}
\end{thm}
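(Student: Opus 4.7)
The plan is to split the reconstruction error into a magnitude contribution and a phase contribution, handle them separately, and then assemble. Since line~4 of Algorithm~\ref{alg:phaseRetrieval1} sets $x_j = \sqrt{X_{jj}}\,\tx_j$ while $(x_0)_j = |(x_0)_j|\,(\tx_0)_j$, adding and subtracting $\diag(|\x_0|)\ee^{\ii\theta}\tx$ gives the identity
\[
\x_0 - \ee^{\ii\theta}\x = \diag(|\x_0|)\bigl(\tx_0 - \ee^{\ii\theta}\tx\bigr) + \diag\bigl(|\x_0| - \sqrt{\diag(X)}\bigr)\ee^{\ii\theta}\tx,
\]
and the triangle inequality yields
\[
\|\x_0 - \ee^{\ii\theta}\x\|_2 \;\le\; \|\x_0\|_\infty\,\|\tx_0 - \ee^{\ii\theta}\tx\|_2 \;+\; \|\tx\|_\infty\,\bigl\||\x_0| - \sqrt{\diag(X)}\bigr\|_2.
\]
Only the first term depends on $\theta$, so the minimization acts on it alone; moreover, interpreting $\tx$ as the entrywise $\sgn$ of the top eigenvector (as in Corollary~\ref{cor:GenBoundv2}) gives $\|\tx\|_\infty \le 1$.

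For the magnitude contribution, I would use the conditioning hypothesis $\|X - X_0\|_F \le \kappa\|\n\|_2$ together with $(X_0)_{jj} = |(x_0)_j|^2$ and the elementary pointwise inequality $|\sqrt{a}-\sqrt{b}|^2 \le |a-b|$ for $a,b \ge 0$. Combined with Cauchy--Schwarz on the $d$ diagonal entries,
\[
\bigl\||\x_0| - \sqrt{\diag(X)}\bigr\|_2^2 \;\le\; \sum_{j=1}^d |X_{jj}-(X_0)_{jj}| \;\le\; \sqrt{d}\,\|X-X_0\|_F \;\le\; \sqrt{d}\,\kappa\|\n\|_2,
\]
which produces the additive term $C\,d^{1/4}\sqrt{\kappa\|\n\|_2}$ appearing in both claimed bounds. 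This step is entirely determined by the conditioning of $\mathcal{A}|_{T_\delta(\C^{d\times d})}$ and is independent of the eigenvector recovery.

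For the phase contribution, I would invoke Corollary~\ref{cor:GenBoundv2} directly under the hypothesis $\|\tX-\tX_0\|_F \le \eta\|\tX_0\|_F$ to get $\min_\theta \|\tx_0 - \ee^{\ii\theta}\tx\|_2 \le C'\eta\,d^{5/2}/\delta^2$; multiplying by $\|\x_0\|_\infty$ and combining with the magnitude estimate immediately gives the first stated bound. For the alternative bound \eqref{equ:ManRes2}, I would substitute the estimate $\eta \le C\sqrt{(\rho(\kappa/\delta)\|\n\|_2 + |S_\rho|)/d}$ from Lemma~\ref{lem:EtaBound} into the phase piece; the factor $\sqrt{d}$ in the denominator cancels one of the $\sqrt{d}$ factors in $d^{5/2}/\delta^2$, converting $\|\x_0\|_\infty(d^{5/2}/\delta^2)\eta$ into $\|\x_0\|_\infty(d/\delta)^2\sqrt{\rho(\kappa/\delta)\|\n\|_2+|S_\rho|}$, exactly as required.

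The only real subtlety is the minor mismatch between the normalization prescribed in line~3 of Algorithm~\ref{alg:phaseRetrieval1} (top eigenvector rescaled so that $\|\tx\|_2 = \sqrt{d}$) and the normalization $\tx=\sgn(\mathbf{u})$ used in Corollary~\ref{cor:GenBoundv2}. In the regime where the bound is informative, the spectral gap from Lemma~\ref{lem:EigGap} forces the top eigenvector of $\tX$ to be close to a unimodular vector, so the two normalizations coincide up to lower-order perturbations that can be absorbed into the universal constants. All remaining work is bookkeeping of constants.
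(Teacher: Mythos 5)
Your proposal is correct and follows essentially the same route as the paper's own proof: the same magnitude/phase splitting $\x_0 - \ee^{\ii\theta}\x = |\x_0|\circ(\tx_0 - \ee^{\ii\theta}\tx) + (|\x_0|-|\x|)\circ \ee^{\ii\theta}\tx$, Corollary~\ref{cor:GenBoundv2} for the phase term, Lemma~\ref{lem:EtaBound} substituted for $\eta$ to get \eqref{equ:ManRes2}, and the $d^{1/4}\sqrt{\kappa\|\n\|_2}$ term from $|\sqrt{a}-\sqrt{b}|^2\le|a-b|$ plus Cauchy--Schwarz (which the paper obtains by citing Lemma 3 of \cite{IVW2015_FastPhase} rather than proving inline). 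The normalization mismatch between line 3 of Algorithm~\ref{alg:phaseRetrieval1} and $\tx=\sgn(\mathbf{u})$ that you flag is likewise elided in the paper itself, so your treatment matches its level of rigor.
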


\begin{proof}

Let $\phi \in [0,2 \pi)$ be arbitrary;  
then $\mathbbm{e}^{\mathbbm{i} \phi }\x = |\x| \circ \mathbbm{e}^{\mathbbm{i} \phi} \tilde{\x}$ and $\x_0 = |\x_0| \circ \tilde{\x}_0$, where $\circ$ denotes the entrywise (Hadamard) product.

We see that%
\begin{align*}
\min_{\phi \in [0, 2 \pi]} \| \x_0 - \mathbbm{e}^{\mathbbm{i} \phi }\x \|_2 &= \min_{\phi \in [0, 2 \pi]} \left\| |\x_0| \circ \tilde{\x}_0 - |\x| \circ \mathbbm{e}^{\mathbbm{i} \phi} \tilde{\x} \right\|_2 \\
&\leq \min_{\phi \in [0, 2 \pi]} \left\| |\x_0| \circ \tilde{\x}_0 - |\x_0| \circ \mathbbm{e}^{\mathbbm{i} \phi} \tilde{\x}  \right\|_2 + \left\| |\x_0| \circ \mathbbm{e}^{\mathbbm{i} \phi} \tilde{\x}  - |\x| \circ \mathbbm{e}^{\mathbbm{i} \phi} \tilde{\x}  \right\|_2 
\end{align*}
where the second term is now independent of $\phi$.  As a result we have that
\begin{equation*}
\min_{\phi \in [0, 2 \pi]} \| \x_0 - \mathbbm{e}^{\mathbbm{i} \phi }\x \|_2 \leq \| \x_0 \|_{\infty} \left( \min_{\phi \in [0, 2 \pi]} \| \tilde{\x}_0 - \mathbbm{e}^{\mathbbm{i} \phi} \tilde{\x} \|_2 \right) + C'' \sqrt{\kappa \sqrt{d} \cdot \| \n \|_2 }
\end{equation*}
for some absolute constant $C'' \in \mathbbm{R}^+$.  Here the bound on the second term follows from Lemma 3 of \cite{IVW2015_FastPhase} and the Cauchy-Schwarz inequality.  
%
%
The first inequality of the theorem now results from an application of Corollary~\ref{cor:GenBoundv2} to the first term.  The second inequality then follows from Lemma~\ref{lem:EtaBound}.
\end{proof}

Looking at the second inequality \eqref{equ:ManRes2} in Theorem~\ref{thm:MainRes} we can see that the error bound there will be vacuous in most settings unless $S_\rho = \emptyset$.  Recalling \eqref{equ:SrhoDef}, one can see that $S_\rho$ will be empty as soon as $\rho = \kappa \delta \| \n \|_2 / \left| (x_0)_{\rm min} \right|^4$, where $(x_0)_{\rm min}$ is the smallest magnitude of any entry in $\x_0$.  Utilizing this value of $\rho$ in \eqref{equ:ManRes2} leads to the following corollary of Theorem~\ref{thm:MainRes}.

\begin{cor}
Let $(x_0)_{\rm min} := \min_j |(x_0)_j|$ be the smallest magnitude of any entry in $\x_0$.  Then, the estimate $\x$ produced by Algorithm~\ref{alg:phaseRetrieval1} satisfies 
\[ \min_{\theta \in [0, 2 \pi]} \left\Vert  \x_0 - \mathbbm{e}^{\mathbbm{i} \theta} \x \right\Vert_2 \leq C \left( \frac{\Vert \x_0 
        \Vert_{\infty}}{(x_0)^2_{\rm min}} \right) \left( \frac{d}{\delta} \right)^2 \kappa \| \n \|_2 + C d^{\frac{1}{4}} \sqrt{\kappa \| \n \|_2 },\]
where $C \in \mathbb{R}^+$ is an absolute universal constant.  
\label{Cor:RecovRes}
\end{cor}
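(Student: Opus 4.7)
The approach is direct: this corollary is an optimization of the second bound \eqref{equ:ManRes2} of Theorem~\ref{thm:MainRes} over the free parameter $\rho$. Inside the square root one has $\rho(\kappa/\delta)\|\n\|_2 + |S_\rho|$, and the two contributions trade off against each other: decreasing $\rho$ raises the threshold $(\delta\|\n\|_2/\rho)^{1/4}$ in \eqref{equ:SrhoDef} and can only make $|S_\rho|$ larger, while increasing $\rho$ directly inflates the analytic term $\rho(\kappa/\delta)\|\n\|_2$. The natural plan is therefore to choose $\rho$ just large enough to guarantee $S_\rho = \emptyset$ so that the combinatorial term drops out entirely, and then read off the resulting bound.

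Examining \eqref{equ:SrhoDef}, the equivalence $S_\rho = \emptyset \iff (x_0)_{\rm min} \geq (\delta\|\n\|_2/\rho)^{1/4}$ tells me that any $\rho \geq \delta\|\n\|_2/(x_0)_{\rm min}^4$ will do. I would set $\rho := \kappa \delta \|\n\|_2/(x_0)_{\rm min}^4$, which exceeds this threshold because $\kappa \geq 1$ (the condition number of an invertible operator is always at least one), so in particular $|S_\rho| = 0$. Plugging this value into the first term of \eqref{equ:ManRes2}, one computes
\[
\sqrt{\rho \frac{\kappa}{\delta} \|\n\|_2} \;=\; \sqrt{\frac{\kappa^2 \|\n\|_2^2}{(x_0)_{\rm min}^4}} \;=\; \frac{\kappa \|\n\|_2}{(x_0)_{\rm min}^2},
\]
which, multiplied by the $\|\x_0\|_\infty (d/\delta)^2$ prefactor from \eqref{equ:ManRes2}, gives exactly the first summand claimed in the corollary. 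The additive second summand $C d^{1/4}\sqrt{\kappa\|\n\|_2}$ is inherited unchanged from \eqref{equ:ManRes2}, and the two universal constants are merged into a single $C$.

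There is no real obstacle: everything reduces to a one-line optimization and a short algebraic simplification. The only points that warrant verification are (i) the inequality $\kappa \geq 1$ that justifies using our $\rho$ to force $S_\rho = \emptyset$, and (ii) that the extra factor of $\kappa$ in $\rho$ (beyond the strict threshold $\delta\|\n\|_2/(x_0)_{\rm min}^4$) is exactly what makes $\sqrt{\rho \kappa \|\n\|_2/\delta}$ collapse to the clean expression $\kappa\|\n\|_2/(x_0)_{\rm min}^2$, matching the dependence on $\kappa$ displayed in the statement.
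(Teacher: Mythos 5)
Your proposal is correct and follows essentially the same route as the paper: the paper likewise takes the second bound \eqref{equ:ManRes2} of Theorem~\ref{thm:MainRes}, chooses $\rho = \kappa\delta\|\n\|_2/(x_0)_{\rm min}^4$ so that $S_\rho = \emptyset$, and simplifies $\sqrt{\rho\,\kappa\|\n\|_2/\delta}$ to $\kappa\|\n\|_2/(x_0)_{\rm min}^2$. Your explicit remarks on why $\kappa \geq 1$ forces $S_\rho = \emptyset$ and on the role of the extra $\kappa$ factor are consistent with (and slightly more detailed than) the paper's one-line justification.
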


Corollary~\ref{Cor:RecovRes} yields a deterministic recovery result for any signal $\x_0$ which contains no zero entries.  If desired, a randomized result can now be derived from Corollary~\ref{Cor:RecovRes} for arbitrary $\x_0$ by right multiplying the signal $\x_0$ 
with a random ``flattening'' matrix as done in \cite{IVW2015_FastPhase}.   Finally, we note that a trivial variant of Corollary~\ref{Cor:RecovRes} can also be combined with the discussion in \S \ref{sec:STFT} in order to generate recovery guarantees for the windowed Fourier measurements defined by \eqref{eq:STFT_measurements}.  However, we will leave such variants and extensions to the interested reader.

\section{Numerical Evaluation}
\label{sec:NumEval}
We now present numerical simulations supporting the theoretical recovery guarantees in Section
\ref{sec:RecovGuarantee}.  Our main objective is to evaluate the proposed algorithm against other
existing phase retrieval methods using {\em local} measurements. However, for completeness, we also
present selected results comparing the proposed formulation against other well established phase
retrieval algorithms (such as {\em Wirtinger Flow}) using {\em
global} measurements such as coded diffraction patterns (CDPs). The
results presented here may be recreated using the open source {\em BlockPR} Matlab software package
which is freely available at \cite{bitbucket_BlockPR}. Unless otherwise stated, we use i.i.d.
zero-mean complex Gaussian random test signals with measurement errors modeled using an aditive
Gaussian noise
model. Applied measurement noise and reconstruction error are both reported in decibels (dB) in terms of signal
to noise ratios (SNRs), with 
\[  \mbox{SNR (dB)} = 10 \log_{10} \left( 
        \frac{\sum_{j=1}^D\vert \langle \a_j, \x_0 \rangle \vert ^4}
            {D\sigma^2} \right), \qquad  
    \mbox{Error (dB)} = 10 \log_{10} \left( \frac{\|\x - 
	\x_0 \|_2^2}{\| \x_0 \|^2_2}\right),  \]
where $\a_j, \x_0, \x, \sigma^2$ and $D$ denote the measurement vectors, true signal, recovered signal,
(Gaussian) noise variance and number of measurements respectively. 
All simulations were performed on a laptop computer running GNU/Linux
(Ubuntu Linux 16.04 \verb|x86_64|) with an Intel$^\circledR$ Core\texttrademark M-5Y10c processor,
8GB RAM and Matlab R2016a. Each data point in the timing and robustness plots were obtained as the
average of $100$ trials.

\subsection{Numerical Improvements to Algorithm~\ref{alg:phaseRetrieval1}:  Magnitude Estimation}
\label{sec:MagEstImpNumerical}

Looking at the matrix $\X$ formed on line 1 of Algorithm~\ref{alg:phaseRetrieval1} one can see that
$$\X= \X_0 + N'$$
where $\X_0$ is the banded Hermitian matrix $T_\delta(\x_0 \x_0^*)$ defined in \eqref{equdef:X0}, and $N'$ contains arbitrary banded Hermitian noise.  As stated and analyzed above, Algorithm~\ref{alg:phaseRetrieval1} takes advantage of this structure in line 4 in order to estimate the magnitude of each entry of $\x_0$ based on the fact that 
$$\X_{jj} = |(x_0)_{j}|^2 + N'_{jj}$$
holds for all $j \in [d] := \{ 1, \dots, d\}$.  Though this magnitude estimate suffices for our theoretical treatment above, it can be improved on in practice by using slightly more general techniques.

Considering the component-wise magnitude of $X$, $|X| \in \mathbbm{R}^{d \times d}$, one can see that its entries are 
\begin{equation*}
|\X|_{jk} =  \left\{ \begin{array}{ll} |({x}_0)_j| |({x}_0)_{k}| + N''_{jk} & \textrm{if}~| j - k ~{\rm mod}~d | < \delta \\ 0 & \textrm{otherwise} \end{array} \right.,
\end{equation*}
where $N'' \in \mathbbm{R}^{d \times d}$ represents the changes in magnitude to the entries of $|\X_0|$ due to noise. 
We may then let $D_j \in \R^{\delta \times \delta}$ denote the submatrix of $|X|$ given by \[(D_j)_{kh} = |X|_{(j+k-1)~{\rm mod}~d,~(j+h-1)~{\rm mod}~d},\] for all $j \in [d]$; similarly we let $N_j''$ denote the respective submatrices of $N''$.  With this notation, it is clear that \[D_j = |\x_0|^{(j)} (|\x_0|^{(j)})^* + N_j'',\] where $|\x_0|^{(j)}_k = |\x_0|_{k + j - 1}, k \in [\delta]$.  This immediately suggests that we can estimate the magnitudes of the entries of $\x_0$ by calculating the top eigenvectors of these approximately rank one $D_j$ matrices.

Indeed, if we do so for all of $D_1, \dots, D_d \in \mathbbm{R}^{\delta \times \delta}$, we will produce $\delta$ estimates of each $(x_0)_{j}$ entry's magnitude.  A final estimate of each $|(x_0)_j|$ can then be computed by taking the average, median, etc. of the $\delta$ different estimates of $|(x_0)_j|$ provided by each of the leading eigenvectors of $D_{j-\delta+1}, \dots, D_j$.  Of course, one need neither use all $d$ possible $D_j$ matrices, nor make them have size $\delta \times \delta$.  More generally, to reduce computational complexity, one may instead use $d/s$ matrices, $\tilde{D}_{j'} \in \mathbbm{R}^{\gamma \times \gamma}$, of size $1 \leq \gamma \leq \delta$ and with shifts $s \leq \gamma$ (dividing $d$), having entries
$$(\tilde{D}_{j'})_{k,h} = |X|_{(sj'+k-1)~{\rm mod}~d,~(sj'+h-1)~{\rm mod}~d}.$$
Computing the leading eigenvectors of $\tilde{D}_{j'}$ for all $j' \in [d/s]$ will then produce (multiple) estimates of each magnitude $|(x_0)_j|$ which can then be averaged, etc., as desired in order produce our final magnitude estimates.  As we shall see below, one can achieve better numerical robustness to noise using this technique than what can be achieved using the simpler magnitude estimation technique presented in line 4 of Algorithm~\ref{alg:phaseRetrieval1}.  

\begin{figure}[hbtp]
\centering
\begin{subfigure}[b]{0.495\textwidth}
\centering
\includegraphics[clip=true, trim = 0.75in 3in 0.75in 3in,
scale=0.5]{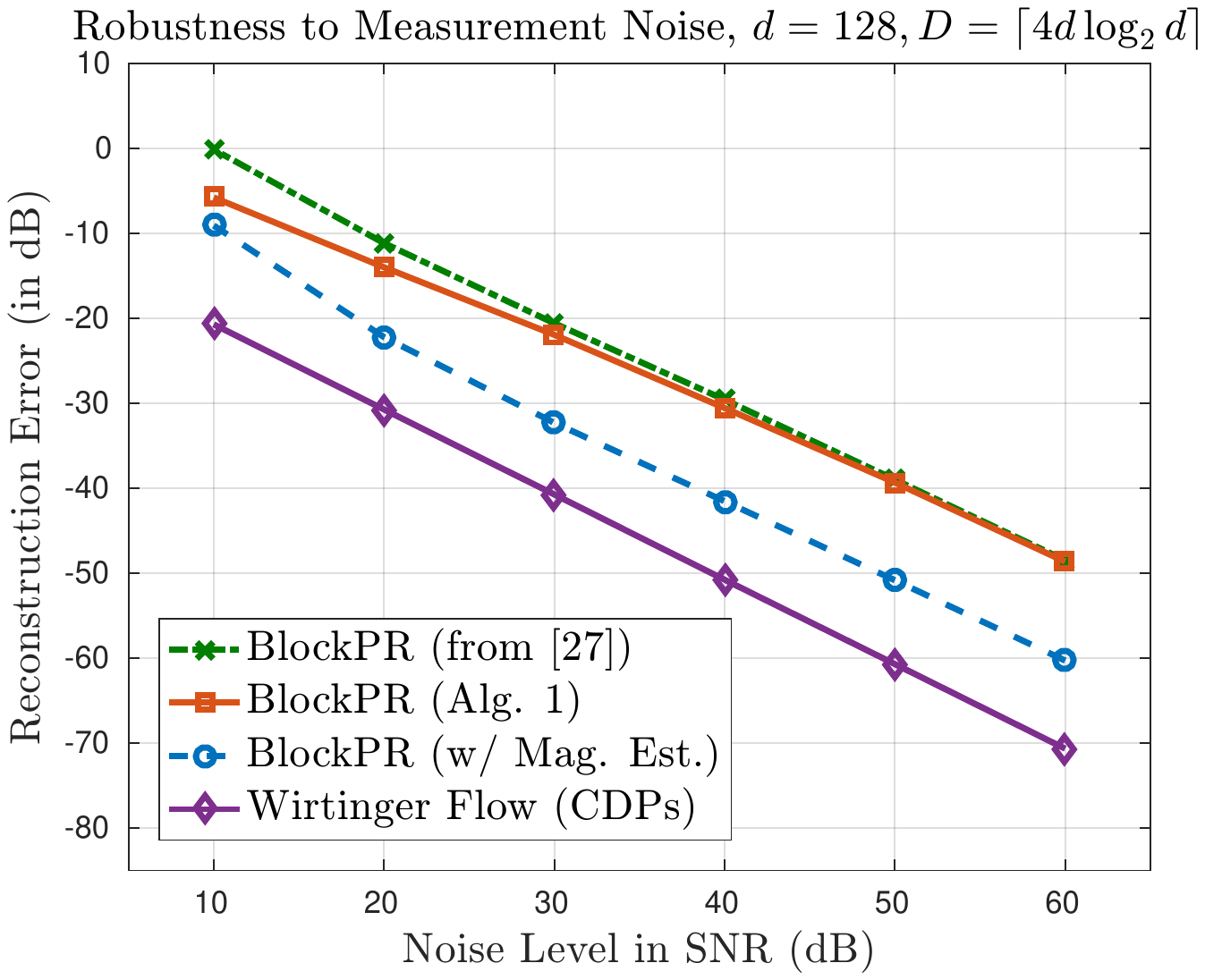}
\caption{Improved Robustness to Measurement Noise -- Comparing Variants of the {\em BlockPR}
algorithm}
\label{fig:eig_vs_greedy}
\end{subfigure}
\hfill
\begin{subfigure}[b]{0.495\textwidth}
\centering
\includegraphics[clip=true, trim = 0.75in 3in 0.75in 3in,
scale=0.5]{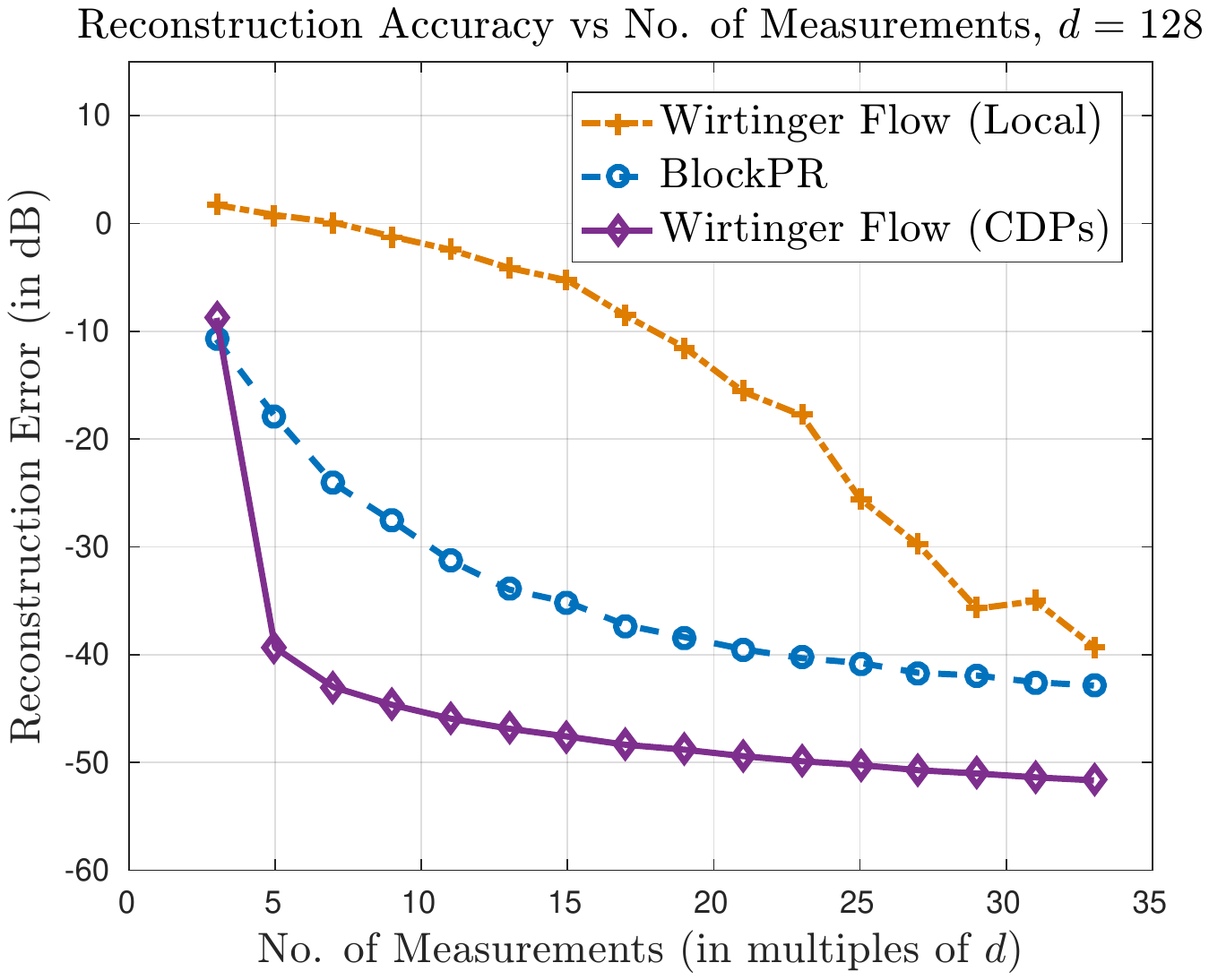}
\caption{Reconstruction Error vs. No. of Measurements; (Reconstruction at $40$dB SNR)}
\label{fig:WF_meas}
\end{subfigure}
\caption{Robust Phase Retrieval -- Local vs. Global Measurements}
\label{fig:WF}
\end{figure}
%

\subsection{Experiments}

We begin by presenting results in Fig. \ref{fig:eig_vs_greedy} demonstrating the improved noise
robustness of the proposed method over the formulation in \cite{IVW2015_FastPhase}. Recall that
\cite{IVW2015_FastPhase} uses a greedy angular synchronization method instead of the
eigenvector-based procedure analyzed in this paper. Fig. \ref{fig:eig_vs_greedy} plots the
reconstruction error when recovering a $d=128$ length complex Gaussian test signal using $D=\lceil
4d\log_2 d\rceil$ measurements at different added noise levels. The local correlation measurements
described in Example 2 of Section \ref{sec:MeasMatrix} are utilized in this plot in and all the ensuing
experiments unless otherwise indicated. Three variants of the proposed algorithm are plotted in 
Fig. \ref{fig:eig_vs_greedy}:
\begin{enumerate}
    \item an implementation of Algorithm \ref{alg:phaseRetrieval1} (denoted by $\square$'s),
    \item an implementation of Algorithm \ref{alg:phaseRetrieval1} with the improved magnitude estimation
        procedure detailed above (with $s=1$ and using the average of the obtained $\tilde D_{j\prime}$
        block magnitude estimates) and post-processed using $100$ iterations of the {\em Gerchberg--Saxton}
        alternating projection algorithm (denoted by $\circ$'s), and 
    \item the algorithmic implementation from \cite{IVW2015_FastPhase} (denoted by $\times$'s). 
\end{enumerate}
We see that the eigenvector-based angular
synchronization method proposed in this paper provides more accurate reconstructions -- especially
at low SNRs -- over the greedy angular synchronization of \cite{IVW2015_FastPhase}. Moreover, the
magnitude estimation procedure detailed above yields significant improvement in reconstruction errors over the two
other variants; consequently, this implementation is used in all plots henceforth. For reference, we
also include reconstruction errors with the {\em Wirtinger Flow} algorithm (denoted by $\Diamond$'s) when using ({\em
global}) coded diffraction pattern (CDP) measurements. Clearly, using global measurements such as
coded diffraction patterns provides superior noise tolerance; however, they are not applicable to
imaging modalities such as ptychography. Indeed, when the {\em Wirtinger Flow} algorithm is used
with local measurements such as those described in this paper, the noise tolerance significantly
deteriorates. Fig. \ref{fig:WF_meas} illustrates this phenomenon by plotting the reconstruction
error in recovering a $d=128$ length complex Gaussian test signal at $40$ dB SNR when using
different numbers of measurements, $D$. {\em Wirtinger flow}, for example, requires a large number
of local measurements before returning accurate reconstructions. 
The wide disparity in
reconstruction accuracy between local and global measurements for {\em Wirtinger Flow} illustrates
the significant challenge in phase retrieval from local measurements. Furthermore, we see that the {\em BlockPR} 
method proposed in this paper is more noise tolerant than {\em Wirtinger Flow} for local
measurements.
\begin{figure}[hbtp]
\centering
\begin{subfigure}[b]{0.495\textwidth}
\centering
\includegraphics[clip=true, trim = 0.75in 3in 0.75in 3in,
scale=0.5]{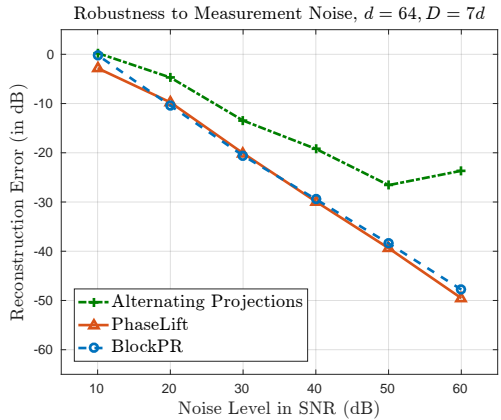}
\caption{Using $D=7d$ measurements.}
\label{fig:noise-7d}
\end{subfigure}
\hfill
\begin{subfigure}[b]{0.495\textwidth}
\centering
\includegraphics[clip=true, trim = 0.75in 3in 0.75in 3in,
scale=0.5]{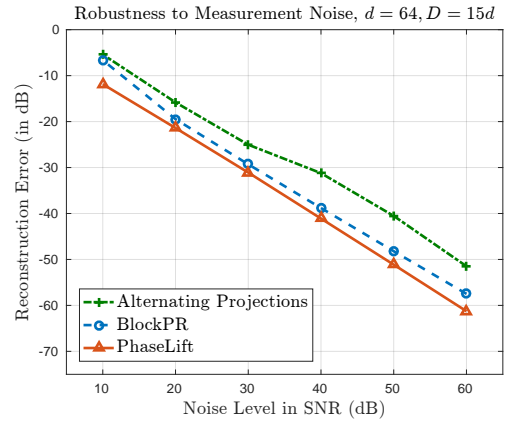}
\caption{Using $D=15d$ measurements.}
\label{fig:noise-15d}
\end{subfigure}
\caption{Robustness to measurement noise -- Phase Retrieval from 
deterministic local correlation measurements.}
\label{fig:noise-local}
\end{figure}
%

Given the weaker performance of {\em Wirtinger Flow} with local measurements, we now restrict our
attention to the empirical evaluation of the proposed method
against the {\em PhaseLift} and {\em Gerchberg-Saxton} alternating projection algorithms. Although
numerical simulations suggest that these methods work with local measurements, we note that (to the
best of our knowledge) there are no theoretical recovery or robustness guarantees for these methods
and measurements.  The {\em PhaseLift} algorithm was implemented as a trace regularized
least-squares problem using CVX \cite{cvx,gb08} -- a package for specifying and solving convex
programs in Matlab; the alternating projection method was initialized with a random complex Gaussian
initial guess and limited to a maximum of $10,000$ iterations.  We begin by presenting numerical
results evaluating the robustness to measurement noise. Figs.
\ref{fig:noise-7d} and \ref{fig:noise-15d} plot the error in reconstructing a $d=64$ length complex
vector $\x_0$ using $D=7d$ and $D=15d$ local correlation-based phaseless measurements respectively.
In particular, the well-conditioned {\em deterministic} measurement construction defined in Example
2 of Section \ref{sec:MeasMatrix} was utilized along with additive Gaussian measurement noise. We see from Fig.
\ref{fig:noise-local} that the method proposed in this paper (denoted {\em BlockPR} in the figure)
performs reliably across a wide range of SNRs and compares favorably against existing popular phase
retrieval algorithms. In particular, the method performs almost as well as the 
{\em PhaseLift} algorithm and returns significantly more accurate reconstructions than the alternating
projections algorithm. We remark that the marginally improved noise robustness of {\em PhaseLift}
is at the expense of a significant increase in computational cost, as we will see in 
Fig. \ref{fig:exectime}.

Next, Fig. \ref{fig:measurements} plots the reconstruction error in recovering a $d=64$-length
complex vector as a function of the number of measurements used. As with Fig.
\ref{fig:noise-local}, the deterministic correlation-based measurement constructions of Section
\ref{sec:MeasMatrix} (Example construction $2$) were utilized along with an additive Gaussian noise model. Plots
are provided for simulations at two noise levels -- $20$ dB and $40$ dB. We observe that the proposed 
algorithm outperforms the popular alternating projections method, and is
almost as accurate as {\em PhaseLift}. Moreover, at the $40$ dB noise level, the proposed method provides the
best reconstruction accuracy when using small numbers of measurements ($D \approx 5d$) which may be
of practical importance. 

Finally, Fig. \ref{fig:exectime} plots the average execution time (in seconds) 
required to solve the phase retrieval problem using $D = \lceil 2d\log_2d\rceil$ noiseless
measurements. For comparison, execution times for the {\em PhaseLift} and alternating projection
algorithms are provided. We observe that the proposed method is several orders of magnitude faster
than the {\em PhaseLift} and alternating projection algorithms. Moreover, the plot confirms the
essentially FFT-time computational complexity (see Section \ref{sec:intro}) of the proposed method.
\begin{figure}[hbtp]
\centering
\begin{subfigure}[b]{0.495\textwidth}
\centering
\includegraphics[clip=true, trim = 0.75in 3in 0.75in 3in,
scale=0.5]{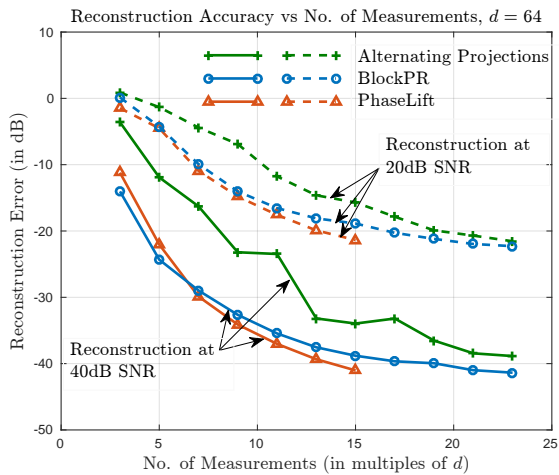}
\caption{Reconstruction Error vs. No. of Measurements}
\label{fig:measurements}
\end{subfigure}
\hfill
\begin{subfigure}[b]{0.495\textwidth}
\centering
\includegraphics[clip=true, trim = 0.75in 3in 0.75in 3in,
scale=0.5]{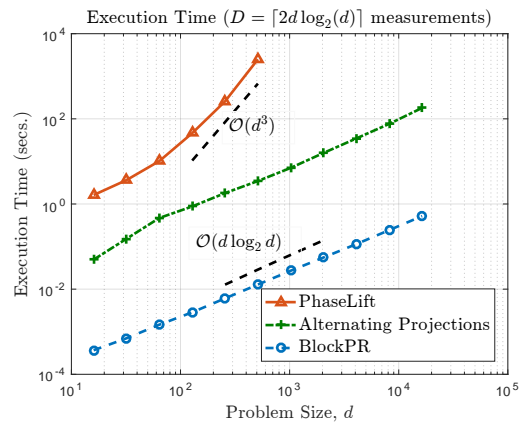}
\caption{Execution Time vs. Problem Size}
\label{fig:exectime}
\end{subfigure}
\caption{Performance Evaluation and Comparison of the Proposed Phase Retrieval Method 
(with Deterministic Local Correlation Measurements and Additive Gaussian Noise)}
\label{fig:performance}
\end{figure}

\section{Concluding Remarks}
\label{sec:conclusion}

In this paper new and improved deterministic robust recovery guarantees are proven for the phase retrieval problem using local correlation measurements.  In addition, a new practical phase retrieval algorithm is presented which is both faster and more noise robust than previously existing approaches (e.g., alternating projections) for such local measurements.  

Future work might include the exploration of more general classes of measurements which are guaranteed to lead to well conditioned linear systems  of the type used to reconstruct $X \approx X_0$ in line 1 of Algorithm~\ref{alg:phaseRetrieval1}.  Currently two deterministic measurement constructions are known (recall, e.g., Section~\ref{sec:MeasMatrix}) -- it should certainly be possible to construct more general families of such measurements.

Other interesting avenues of inquiry include the theoretical analysis of the magnitude estimate approach proposed in Section~\ref{sec:MagEstImpNumerical} in combination with the rest of Algorithm~\ref{alg:phaseRetrieval1}.  Alternate phase retrieval approaches might also be developed by using such local block eigenvector-based methods for estimating phases too, instead of just using the single global top eigenvector as currently done in line 3 of Algorithm~\ref{alg:phaseRetrieval1}.

Finally, more specific analysis of the performance of the proposed methods using masked/windowed Fourier measurements (recall Section~\ref{sec:STFT}) would also be interesting.  In particular, an analysis of the performance of such approaches as a function of the bandwidth of the measurement mask/window could be particularly enlightening.    

\bibliographystyle{abbrv}
\bibliography{SparsePR}

\section*{Appendix}
In this section we present a simpler (and easier to derive), albeit weaker,  perturbation result in the spirit of Section \ref{sec:Perturb}, which is associated with the analysis of line 3 of Algorithm~\ref{alg:phaseRetrieval1}.
Specifically, we will derive an upper bound on $\min_{\theta\in[0,2\pi]} \Vert \tx_0 - \mathbbm{e}^{\mathbbm{i}\theta}\tx \Vert_2$ (provided by Theorem~\ref{cor2:GenBound}), which scales like $d^3$.  While this dependence is strictly worse than the one derived in Section \ref{sec:Perturb}, it is easier to obtain and the technique may be of independent interest. 

We will begin with a result concerning the top eigenvector of any Hermitian matrix.  

\begin{lem}
Let $\X_0 = \sum^d_{j=1} \nu_j \x_j\x_j^*$ be Hermitian with eigenvalues $\nu_1 \geq \nu_2 \geq \dots \geq \nu_d$ and orthonormal eigenvectors $\x_1, \dots, \x_d \in \mathbbm{C}^d$.  Suppose that $\X = \sum^d_{j=1} \lambda_j \v_j\v_j^*$ is Hermitian with eigenvalues $\lambda_1 \geq \lambda_2 \geq \dots \geq \lambda_d$, orthonormal eigenvectors $\v_1, \dots, \v_d \in \mathbbm{C}^d$, and $\Vert \X-\X_0 \Vert_{\rm F}~\leq \eta \Vert \X_0 \Vert_F$ for some $\eta \geq 0$.  Then, 
\[ \left( 1 - | \langle \x_1, \v_1 \rangle |^2 \right)  \leq  \frac{4 \eta^2 \Vert \X_0 \Vert^2_{\rm F} }{(\nu_1- \nu_2)^2} .\]
        \label{lem:GenBound}
\end{lem}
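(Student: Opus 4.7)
The plan is to expand $\v_1$ in the orthonormal eigenbasis $\{\x_j\}$ of $\X_0$ and exploit the fact that $\v_1$ is an eigenvector of $\X$ to control how $\X_0$ ``stretches'' each component unequally by $\nu_j$. Write $\v_1 = \sum_j c_j \x_j$ with $c_j = \langle \x_j, \v_1 \rangle$; then $1 - |\langle \x_1, \v_1\rangle|^2 = \sum_{j \geq 2} |c_j|^2$, so the goal reduces to bounding the tail mass of $\{|c_j|^2\}_{j \ge 2}$.

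The key identity I would use is
\[
(\X - \X_0)\v_1 \;=\; \lambda_1 \v_1 - \X_0 \v_1 \;=\; \sum_{j=1}^d c_j (\lambda_1 - \nu_j)\, \x_j.
\]
Taking norms and using orthonormality of $\{\x_j\}$ together with $\|(\X - \X_0)\v_1\|_2 \le \|\X - \X_0\|_{\mathrm{op}} \le \|\X - \X_0\|_F \le \eta \|\X_0\|_F$ gives
\[
\sum_{j=1}^d |c_j|^2 (\lambda_1 - \nu_j)^2 \;\le\; \eta^2 \|\X_0\|_F^2.
\]
Dropping the $j=1$ term and seeking a uniform lower bound on $(\lambda_1 - \nu_j)^2$ for $j \ge 2$ is what will produce the spectral-gap factor $(\nu_1 - \nu_2)^{-2}$.

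The needed lower bound comes from Weyl's inequality: $|\lambda_1 - \nu_1| \le \|\X - \X_0\|_{\mathrm{op}} \le \eta\|\X_0\|_F$. I would then split into two cases. In the easy case $\eta \|\X_0\|_F \ge (\nu_1 - \nu_2)/2$, the claimed right-hand side $4\eta^2\|\X_0\|_F^2/(\nu_1-\nu_2)^2$ already exceeds $1$, and the bound is vacuous since the left-hand side is at most $1$. In the main case $\eta \|\X_0\|_F < (\nu_1-\nu_2)/2$, Weyl gives $\lambda_1 \ge \nu_1 - (\nu_1 - \nu_2)/2 = (\nu_1 + \nu_2)/2$, and hence for every $j \ge 2$,
\[
\lambda_1 - \nu_j \;\ge\; \lambda_1 - \nu_2 \;\ge\; \frac{\nu_1 - \nu_2}{2}.
\]
Substituting this into the displayed inequality yields
\[
\frac{(\nu_1-\nu_2)^2}{4} \sum_{j \ge 2} |c_j|^2 \;\le\; \eta^2 \|\X_0\|_F^2,
\]
which is exactly the desired bound on $1 - |\langle \x_1, \v_1\rangle|^2$.

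The main obstacle — which is really only a minor bookkeeping one — is handling the case split cleanly so that the bound is stated without any extra smallness hypothesis on $\eta$. The calculus content is routine; the one ingredient that must be invoked carefully is Weyl's inequality for Hermitian perturbations, which ensures $\lambda_1$ tracks $\nu_1$ closely enough to sit above the gap midpoint. Everything else is Pythagoras in the eigenbasis and the elementary bound $\|\cdot\|_{\mathrm{op}} \le \|\cdot\|_F$.
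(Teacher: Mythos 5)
Your proof is correct, and it takes a genuinely different route from the paper. The paper proves Lemma~\ref{lem:GenBound} in one step by invoking the Davis--Kahan $\sin\theta$ theorem as a black box (in the variant of Corollary 1 of Yu et al., where the gap is measured in the spectrum of $\X_0$, which is where the factor $2$ comes from), and then squaring $\sin\bigl(\arccos\bigl(\lvert\langle \x_1,\v_1\rangle\rvert\bigr)\bigr) \leq 2\eta\Vert\X_0\Vert_F/\lvert\nu_1-\nu_2\rvert$. You instead reprove this special case from scratch: expanding $\v_1$ in the eigenbasis of $\X_0$, using the identity $(\X-\X_0)\v_1=\sum_j c_j(\lambda_1-\nu_j)\x_j$ together with $\Vert\cdot\Vert_{\mathrm{op}}\leq\Vert\cdot\Vert_F$, and then securing the uniform lower bound $\lvert\lambda_1-\nu_j\rvert\geq(\nu_1-\nu_2)/2$ for $j\geq 2$ via Weyl's inequality, with the case $\eta\Vert\X_0\Vert_F\geq(\nu_1-\nu_2)/2$ dismissed because the claimed bound is then vacuous (the left side is at most $1$). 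All steps check out, the case split correctly removes any smallness assumption on $\eta$, and you recover exactly the paper's constant $4$. What the paper's route buys is brevity and a pointer to a general, reusable perturbation theorem; what your route buys is a self-contained, elementary argument (Pythagoras plus Weyl) that makes transparent where the gap $(\nu_1-\nu_2)$ and the factor $2$ enter, essentially re-deriving the top-eigenvector case of Davis--Kahan that the paper cites.
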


\begin{proof}
An application of the $\sin \theta$ theorem \cite{davis1970rotation,stewart1990matrix} (see, e.g., the proof of Corollary 1 in \cite{yu2015useful}) tells us that 
\begin{equation*}
\sin \left( \arccos \left( | \langle \x_1, \v_1 \rangle | \right) \right) \leq \frac{2 \eta \Vert \X_0 \Vert_F}{|\nu_1 - \nu_2|}.
\label{equ:SinthetaApp}
\end{equation*}
Squaring both sides we then learn that 
\begin{equation}
 \left( 1 - | \langle \x_1, \v_1 \rangle |^2 \right)  = \sin^2 \left( \arccos \left( | \langle \x_1, \v_1 \rangle | \right) \right) \leq \frac{4 \eta^2 \Vert \X_0 \Vert^2_F}{(\nu_1 - \nu_2)^2},
\label{equ:SinthetaApp}
\end{equation}
giving us the desired inequality.  
\end{proof}

The following variant of Lemma~\ref{lem:GenBound} concerning rank 1 matrices $\X_0$ is of use in the analysis of many other phase retrieval methods, and can be used, e.g., to correct and simplify the proof of equation (1.8) in Theorem 1.3 of \cite{candes2014solving}.

\begin{lem}
Let $\x_0 \in \mathbb C^d$, set $\X_0 = \x_0\x_0^*$, and let 
$\X \in \mathbb C^{d \times d}$ be Hermitian with $\Vert \X~-~\X_0 \Vert_F~\leq \eta
\Vert \X_0 \Vert_F = \eta \Vert \x_0\Vert_2^2$ for some $\eta \geq 0$.  Furthermore, let $\lambda_i$ be the $i$-th largest magnitude
eigenvalue of $\X$ and $\v_i \in \mathbb{C}^d$ an associated eigenvector, such 
that the $\v_i$ form an orthonormal eigenbasis. Then
\[ \min_{\theta \in [0, 2 \pi]} \Vert  \mathbbm{e}^{\mathbbm{i} \theta}  \x_0 - \sqrt{ 
    |\lambda_1 |} \v_1 \Vert_2 \leq (1+2 \sqrt{2}) \eta \Vert \x_0 
        \Vert_2.\footnote{It is interesting to note that similar bounds can also be obtained using simpler techniques (see, e.g., \cite{CandesFix}).} \]
        \label{cor:rank1Bound}
\end{lem}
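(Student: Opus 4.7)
The plan is to reduce the statement to \textbf{Lemma~\ref{lem:GenBound}} (the ``sin-theta'' bound already proved just above), then close the gap between $\|\x_0\|$ and $\sqrt{|\lambda_1|}$ using Weyl's inequality, and finally chain the two estimates with a single triangle inequality. The constant $1 + 2\sqrt{2}$ splits cleanly as $2\sqrt{2} + 1$, which is exactly how the two pieces will combine.

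First I would specialize \textbf{Lemma~\ref{lem:GenBound}} to the rank-one case. Since $\X_0 = \x_0\x_0^*$, its eigenvalues are $\nu_1 = \|\x_0\|_2^2$ with eigenvector $\x_0/\|\x_0\|_2$ and $\nu_2 = \cdots = \nu_d = 0$, while $\|\X_0\|_F = \|\x_0\|_2^2$. In particular the spectral gap $\nu_1 - \nu_2$ equals $\|\X_0\|_F$. Substituting into the lemma gives, with $c := |\langle \x_0/\|\x_0\|_2, \v_1\rangle|$,
\[
1 - c^2 \;\leq\; \frac{4\eta^2 \|\X_0\|_F^2}{(\nu_1-\nu_2)^2} \;=\; 4\eta^2.
\]
Here I also need to verify that $\v_1$ corresponds to the largest eigenvalue (not just the largest in magnitude) so that the appeal to Lemma~\ref{lem:GenBound} is valid. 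This follows from Weyl's inequality: each eigenvalue of $\X$ lies within $\|\X-\X_0\|_{\rm op} \leq \|\X-\X_0\|_F \leq \eta \|\x_0\|_2^2$ of one of $\{\|\x_0\|_2^2, 0, \ldots, 0\}$, so for $\eta < 1/2$ (the only regime where the bound is non-vacuous) the unique eigenvalue of $\X$ close to $\|\x_0\|_2^2$ is positive and dominates all others in magnitude, giving $\lambda_1 > 0$ and $|\lambda_1| = \lambda_1$.

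Next I would handle the magnitude discrepancy. By Weyl as above, $|\lambda_1 - \|\x_0\|_2^2| \leq \eta \|\x_0\|_2^2$, and since $\lambda_1 > 0$,
\[
\big|\sqrt{\lambda_1} - \|\x_0\|_2\big| \;=\; \frac{|\lambda_1 - \|\x_0\|_2^2|}{\sqrt{\lambda_1} + \|\x_0\|_2} \;\leq\; \frac{\eta\|\x_0\|_2^2}{\|\x_0\|_2} \;=\; \eta \|\x_0\|_2.
\]
Now pick $\theta \in [0, 2\pi]$ that maximizes $\Re(\ee^{\ii \theta}\langle \x_0, \v_1\rangle)$, so that this real part equals $|\langle \x_0, \v_1\rangle| = \|\x_0\|_2 \cdot c$. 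Expanding the square then yields the identity
\[
\|\ee^{\ii\theta}\x_0 - \|\x_0\|_2 \v_1\|_2^2 \;=\; 2\|\x_0\|_2^2 (1 - c),
\]
and from $1 - c \leq (1-c^2)/(1+c) \leq 4\eta^2$ (valid because $c \geq 0$) we obtain $\|\ee^{\ii\theta}\x_0 - \|\x_0\|_2 \v_1\|_2 \leq 2\sqrt{2}\,\eta\|\x_0\|_2$.

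Finally I would combine the two estimates via the triangle inequality
\[
\|\ee^{\ii\theta}\x_0 - \sqrt{\lambda_1}\v_1\|_2 \;\leq\; \|\ee^{\ii\theta}\x_0 - \|\x_0\|_2 \v_1\|_2 + \big|\,\|\x_0\|_2 - \sqrt{\lambda_1}\,\big| \;\leq\; 2\sqrt{2}\,\eta\|\x_0\|_2 + \eta\|\x_0\|_2,
\]
which is exactly the claimed $(1 + 2\sqrt{2})\eta\|\x_0\|_2$ bound. The main obstacle is really just the bookkeeping for the constant: a naive squared-norm expansion of $\|\ee^{\ii\theta}\x_0 - \sqrt{\lambda_1}\v_1\|^2$ produces something like $(1 + 8\sqrt{2})\eta^2\|\x_0\|_2^2$ inside a square root, which is slightly worse than $(1 + 2\sqrt{2})^2$. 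The trick that saves the constant is introducing the intermediate vector $\|\x_0\|_2 \v_1$ so that the angular error (controlled by $1 - c \leq 4\eta^2$) and the magnitude error (controlled by Weyl) are measured separately and added linearly rather than combined under one square root.
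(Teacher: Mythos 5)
Your proof is correct and follows essentially the same route as the paper's: specialize Lemma~\ref{lem:GenBound} to the rank-one case (gap $\nu_1-\nu_2=\Vert\x_0\Vert_2^2$), choose the phase so the inner product is real and nonnegative to get $\Vert\ee^{\ii\theta}\x_0-\Vert\x_0\Vert_2\v_1\Vert_2\le 2\sqrt{2}\,\eta\Vert\x_0\Vert_2$, and then add the Weyl-controlled discrepancy $\vert\sqrt{|\lambda_1|}-\Vert\x_0\Vert_2\vert\le\eta\Vert\x_0\Vert_2$ via a single triangle inequality. Your extra verification that the largest-magnitude eigenvalue is also the largest signed eigenvalue (so that Lemma~\ref{lem:GenBound} applies) addresses a point the paper leaves implicit, though your dismissal of the regime $\eta\ge 1/2$ as ``vacuous'' is only approximately accurate; this is a minor edge case shared with the paper's own argument.
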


\begin{proof}
In this special case of Lemma~\ref{lem:GenBound} we have $\nu_1 = \Vert \X_0 \Vert_F = \Vert \x_0\Vert_2^2$ and $\x_1 := \x_0 / \Vert \x_0 \Vert$.  Choose $\phi \in [0, 2 \pi]$ such that $\langle \mathbbm{e}^{\mathbbm{i} \phi} \x_0, \v_1 \rangle  = \vert \langle  \x_0, \v_1 \rangle \vert$.  Then, 
\begin{align}
 \Vert \mathbbm{e}^{\mathbbm{i} \phi} \x_0 - \sqrt{\nu_1} \v_1 \Vert_2^2 &= 2\nu_1 - 2 \nu_1  \cdot \vert \langle  \x_0 / \Vert \x_0 \Vert, \v_1 \rangle \vert = 2\nu_1 - 2 \nu_1  \cdot \vert \langle  \x_1, \v_1 \rangle \vert \nonumber \\ &\leq 2 \nu_1 \left( 1 - \vert \langle  \x_1, \v_1 \rangle \vert \right) \left( 1 + \vert \langle  \x_1, \v_1 \rangle \vert \right) \label{equ:CorIntermed} \\ &= 2 \nu_1 \left( 1 - | \langle \x_1, \v_1 \rangle |^2 \right) \leq 8 \eta^2 \Vert \X_0 \Vert_{\rm F} \nonumber
\end{align}
where the last inequality follows from Lemma~\ref{lem:GenBound} with $\nu_1 = \Vert \X_0 \Vert_F = \Vert \x_0\Vert_2^2$.  Finally, by the triangle inequality, Weyl's inequality (see, e.g., \cite{horn2012matrix}), and \eqref{equ:CorIntermed}, we have
\begin{align*}
  \Vert \mathbbm{e}^{\mathbbm{i} \phi} \x_0 - \sqrt{ |\lambda_1|} \v_1 \Vert_2 & \leq \Vert 
    \mathbbm{e}^{\mathbbm{i} \phi} \x_0 -\sqrt \nu_1 \v_1 \Vert_2 + \Vert \sqrt \nu_1 \v_1 - 
    \sqrt{|\lambda_1|} \v_1 \Vert_2 \\
    & \leq 2 \sqrt{2} \cdot \eta \sqrt \nu_1 + \left \vert \sqrt \nu_1 - \sqrt{|\lambda_1|} \right \vert \\ & \leq 2 \sqrt{2} \cdot \eta \sqrt 
        \nu_1 + \frac{| \nu_1 - \lambda_1 |}{\sqrt \nu_1 + \sqrt{|\lambda_1|}} \\
     & \leq 2 \sqrt{2} \cdot \eta \sqrt \nu_1 + \frac{\eta \nu_1}{\sqrt \nu_1 + \sqrt{|\lambda_1|}}\\ 
    & \leq (1+2 \sqrt{2}) \eta \sqrt \nu_1.
\end{align*}
The desired result now follows.
\end{proof}

We may now use Lemma~\ref{lem:GenBound} to produce a perturbation bound for our banded matrix of phase differences $\tilde{\X}_0$ from \eqref{eq:X_0}.

\begin{thm}
Let $\tX_0 = T_{\delta}(\tx_0 \tx_0^*)$ where $|(\tx_0)_i| = 1$ for each $i$.  Further suppose $\tX \in T_{\delta}(\H^d)$ has $\tx$ as its top eigenvector, where $||\tx||_2 = \sqrt{d}$.  Suppose that $\Vert \tX_0 - \tX \Vert_F \le \eta \Vert \tX_0 \Vert_F$ 
for some $\eta>0$.  Then, there exists an absolute constant $C \in \mathbb{R}^+$ such that
\[\min_{\theta\in[0,2\pi]} \Vert \tx_0 - \mathbbm{e}^{\mathbbm{i}\theta} \tx \Vert_2 \le C\bigfrac{\eta d^3}{\delta^{\frac{5}{2}}}.\]

\label{cor2:GenBound}
\end{thm}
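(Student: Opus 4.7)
The plan is to invoke the general eigenvector perturbation result (Lemma on Hermitian matrices from the appendix, based on the Davis--Kahan $\sin\theta$ theorem) with $\tX_0$ playing the role of the clean matrix and $\tX$ the perturbed matrix, and then convert the bound on the inner product of top eigenvectors into a bound on $\min_\theta \|\tx_0 - \ee^{\ii\theta}\tx\|_2$. All the quantitative inputs to this plan have already been established earlier in the paper.

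First, I would identify the top eigenpair of $\tX_0$. By Lemma~\ref{lem:spectrum}, $\tX_0 = \widetilde D_0 F \Lambda F^*\widetilde D_0^*$, so its top eigenvalue is $\nu_1 = 2\delta-1$, attained by the unit eigenvector $\x_1 = \tx_0/\sqrt d$ (noting $|(\tx_0)_i|=1$ gives $\|\tx_0\|_2 = \sqrt d$). By assumption, $\v_1 = \tx/\sqrt d$ is the corresponding unit top eigenvector of $\tX$. Next, I would assemble the three numerical inputs needed to apply Lemma~\ref{lem:GenBound}:
\begin{itemize}
\item $\|\tX_0\|_F^2 = d(2\delta-1) \le 2d\delta$, read off directly from $\tX_0 \in T_\delta(\C^{d\times d})$ with unit-modulus nonzero entries;
\item the spectral gap $\nu_1 - \nu_2 \ge C\delta^3/d^2$, which follows from Lemma~\ref{lem:EigGap} (the algebraic gap dominates the magnitude gap there since $\nu_j \le |\nu_j|$, so $\nu_1 - \nu_j \ge \nu_1 - |\nu_j|$);
\item the hypothesis $\|\tX - \tX_0\|_F \le \eta\|\tX_0\|_F$.
\end{itemize}
Plugging these into Lemma~\ref{lem:GenBound} yields
\[
1 - |\langle \x_1, \v_1\rangle|^2 \;\le\; \frac{4\eta^2 \cdot d(2\delta-1)}{(C\delta^3/d^2)^2} \;\lesssim\; \frac{\eta^2 d^5}{\delta^5}.
\]

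The remaining step is to pass from this inner-product bound back to an $\ell_2$-distance bound on the (scaled) eigenvectors. Choosing $\theta\in[0,2\pi]$ so that $\langle \ee^{\ii\theta}\tx, \tx_0\rangle = |\langle \tx, \tx_0\rangle|$, and using $\|\tx_0\|_2 = \|\tx\|_2 = \sqrt d$, I would compute
\[
\min_{\theta\in[0,2\pi]} \|\tx_0 - \ee^{\ii\theta}\tx\|_2^2 \;=\; 2d\bigl(1 - |\langle \x_1,\v_1\rangle|\bigr) \;\le\; 2d\bigl(1 - |\langle \x_1,\v_1\rangle|^2\bigr),
\]
where the last inequality uses $1-t \le 1-t^2$ for $t\in[0,1]$. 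Combining this with the previous display gives $\min_\theta \|\tx_0 - \ee^{\ii\theta}\tx\|_2^2 \lesssim \eta^2 d^6/\delta^5$, and taking square roots produces the desired bound $C\eta d^3/\delta^{5/2}$.

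There is essentially no hard obstacle, since Lemma~\ref{lem:EigGap} and Lemma~\ref{lem:GenBound} do all the heavy lifting; the only point requiring mild care is verifying that the algebraic gap $\nu_1-\nu_2$ (as required by the $\sin\theta$ theorem) inherits the lower bound $\Omega(\delta^3/d^2)$ from the magnitude-gap statement of Lemma~\ref{lem:EigGap}, and tracking the precise powers of $d$ and $\delta$ through the Frobenius norm and gap substitutions so as to land on the clean exponent $5/2$ for $\delta$ and $3$ for $d$.
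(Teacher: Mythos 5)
Your proposal is correct and follows essentially the same route as the paper's appendix proof: both combine Lemma~\ref{lem:EigGap} (spectral gap $\gtrsim \delta^3/d^2$) with the Davis--Kahan-based Lemma~\ref{lem:GenBound} using $\|\tX_0\|_F^2 = d(2\delta-1)$, and then convert the inner-product bound to the $\ell_2$ bound via the same choice of global phase and the inequality $1-t \le 1-t^2$. Your extra remark that the algebraic gap $\nu_1-\nu_2$ inherits the lower bound from the magnitude gap is a point the paper leaves implicit, but otherwise the arguments coincide.
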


\begin{proof}
Recall that the phase vectors $\tilde{\x}$ and $\tilde{\x}_0$ are normalized so that $\| \tilde{\x} \|_2 = \| \tilde{\x}_0 \|_2 = \sqrt{d}$.  Combining Lemmas~\ref{lem:EigGap} and \ref{lem:GenBound} after noting that $\| \tilde{\X}_0 \|^2_{\rm F} = d (2 \delta - 1)$ we learn that 
\begin{equation}
\left( 1 - \frac{1}{d^2}| \langle \tilde{\x}_0, \tilde{\x} \rangle |^2 \right)  \leq  C' \eta^2 \left( \frac{d}{\delta} \right)^5
\label{equ:cor2GenBIP}
\end{equation}
for an absolute constant $C' \in \mathbb{R}^+$.  Let $\phi \in [0,2 \pi)$ be such that $\operatorname{Re}\left( \langle \tilde{\x}_0,\mathbbm{e}^{\mathbbm{i} \phi} \tilde{\x} \rangle \right) = \left| \langle \tilde{\x}_0, \tilde{\x} \rangle \right|$.  Then,
\begin{align*}
\| \tilde{\x}_0 - \mathbbm{e}^{\mathbbm{i} \phi} \tilde{\x} \|^2_2 &= 2d - 2 \operatorname{Re}\left( \langle \tilde{\x}_0,\mathbbm{e}^{\mathbbm{i} \phi} \tilde{\x} \rangle \right) \\
&= 2d \left( 1 - \frac{1}{d} \left| \langle \tilde{\x}_0, \tilde{\x} \rangle \right| \right) \leq 2d \left( 1 - \frac{1}{d^2} \left| \langle \tilde{\x}_0, \tilde{\x} \rangle \right|^2 \right).
\end{align*}
Combining this last inequality with \eqref{equ:cor2GenBIP} concludes the proof.
\end{proof}

\section*{Acknowledgements}

The authors would like to thank Felix Krahmer for helpful discussions regarding Lemma~\ref{cor:rank1Bound}.
MI and RS would like to thank the Hausdorff Institute of Mathematics, Bonn for its hospitality during its Mathematics  of Signal Processing Trimester Program. A portion of this work was completed during that time. 

\end{document}